\newtheorem{Theorem}{Theorem}[section]
\newtheorem{Corollary}[Theorem]{Corollary}
\newtheorem{Lemma}[Theorem]{Lemma}
\newtheorem{Claim}[Theorem]{Claim}
\theoremstyle{Definition}
\newtheorem{Example}[Theorem]{Example}
\theoremstyle{Remark}
\newtheorem{Remark}[Theorem]{Remark}
\def\vr{{\varphi}}
\newcommand{\dint}{\displaystyle\int}
\def\Re{\operatorname{{Re}}}
\def\sing{\operatorname{{sing}}}
\def\codim{\operatorname{{codim}}}
\title[On integral conditions for the existence of first integrals]{On integral conditions for the existence of first integrals analytic saddle singularities}
\author{V. Le\'on and B. Sc\'ardua}
\address{V. Le\'on. ILACVN - CICN, Universidade Federal da Integração Latino-Americana, Parque tecnológico de Itaipu, Foz do Iguaçu-PR, 85867-970 - Brazil}
\email{victor.leon@unila.edu.br}
\address{B. Sc\'ardua. Instituto de Matem\'atica - Universidade Federal do Rio de Janeiro,
CP. 68530-Rio de Janeiro-RJ, 21945-970 - Brazil}
\email{bruno.scardua@gmail.com}
\keywords{foliation; center singularity; first integral; integrable form; Reeb theorem.}
\date{}
\begin{document}

\maketitle

\begin{abstract}
We study one-parameter analytic integrable deformations of the germ of $2\times(n-2)$-type complex saddle  singularity given by  $d(xy)=0$ at the origin $0 \in \mathbb C^2\times \mathbb C^{n-2}$. Such a deformation writes ${\omega}^t=d(xy) + \sum\limits_{j=1}^\infty t^j \omega_j$
where $t\in \mathbb C,0$ is the parameter of the deformation and the coefficients  $\omega_j$ are holomorphic one-forms in some neighborhood of the origin $0\in \mathbb C^n$. We prove that, under a nondegeneracy condition of the singular set of the deformation, with respect to the
fibration $d(xy)=0$, the existence of a holomorphic first integral for each element ${\omega}^t$ of the deformation is equivalent to the vanishing of certain line integrals $\oint_{\gamma_c}{\omega}^t=0, \forall \gamma_c, \forall t$ calculated on
cycles $\gamma_c$ contained in the fibers $xy=c, \,0 \ne  c \in \mathbb C,0$. This result is quite sharp regarding the conditions of the singular set and on the vanishing of the integrals in cycles.
It is also  not valid for ramified saddles, i.e., for deformations of saddles of the form $x^ny^m=c$ where $n+m>2$.  As an application of our techniques we obtain a criteria for the existence of first integrals for integrable codimension one deformations of quadratic  analytic  center-cylinder type singularities  in terms of the vanishing of some easy to compute line integrals.
\end{abstract}

\tableofcontents

\section{Introduction and main result}
\label{section:introduction} We study deformations of an unramified {\em complex analytic saddle}. This means, a singularity
given by the levels of the function germ $f=xy\colon (\mathbb C^2\times \mathbb C^{n-2}, 0) \to (\mathbb C,0)$ in coordinates $(x,y,z_1,...,z_{n-2})\in \mathbb C^2 \times \mathbb C^{n-2}$.
 For this sake we shall consider one-parameter analytic families of one-forms $\omega^t = d(xy) + \sum\limits_{j=1} ^\infty t^j \omega_j$ where $\omega_j$ is a germ of a holomorphic one-form at the origin $0\in \mathbb C^n$.
The one-form is assumed to be integrable, $\omega^t \wedge d \omega^t=0$ and depends analytically on the parameter $t\in \mathbb C_t$. Here $\mathbb C_t$ stands for the space of parameters $t$ and identifies with the space germ $(\mathbb C,0)$.
We shall study under which conditions the deformation admits a holomorphic first integral.

We  denote by $(f_c)$ the germ at $0\in \mathbb C^2$ of the level hypersurface $xy=c$, i.e., given by $\{(x,y,z)\in \mathbb C^2\times \mathbb C^{n-2}, \, xy=c\}$. By a {\it cycle} $\gamma_c \subset (f_c)$ we shall mean a continuous closed path in $(f_c)$. By $\oint _{\gamma_c} {\omega}^t$ we denote the line integral of the one-form $\omega^t$ along the closed path $\gamma_c\subset (f_c)$.

Our main result splits into two different cases according to the ambient dimension.

\begin{Theorem}
\label{Theorem:I}
Let $\{{\omega}^t\}_{t \in \mathbb C_t}$ be an analytic deformation of the complex saddle $df=0$ where $f=xy$ in
$(\mathbb C^2 \times \mathbb C^{n-2},0), \, n \geq 2$.  Then the following items are equivalent:

\begin{enumerate}[{\rm(i)}]
\item \, $\oint _{\gamma_c} {\omega}^t=0, \, \forall \gamma_c \subset (f_c), 0 \ne c \approx 0$.

\item \, $\omega^t=a^tdf + dh^t$ for some formal one-parameter
 families of holomorphic functions $a^t, h^t \in \mathcal O_n \otimes \hat{\mathcal O}_1$.
\end{enumerate}

For  $n\geq 3$, if the 2-form $\alpha:=\omega^t \wedge df$ has singular set of codimension $\ne 1$  at $0\in \mathbb C^n \times \mathbb C_t$ then the above conditions imply:

\begin{itemize}

\item[{\rm (iii)}] There is a {\em one-parameter holomorphic first integral} for $\omega^t$, i.e., there is a  holomorphic function $F \colon (\mathbb C^n\times \mathbb C_t, 0) \to (\mathbb C,0)$ such that
 each restriction $F^t(q)=F(q,t)$ is a holomorphic first integral for ${\omega}^t$ and $F^0=f$.

\end{itemize}

\end{Theorem}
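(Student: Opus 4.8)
The plan is to regard the parameter $t$ as an extra coordinate and to manufacture the first integral directly on $(\mathbb{C}^{n+1},0)=(\mathbb{C}^n\times\mathbb{C}_t,0)$, using the decomposition (ii) only to fix its analytic shape. The starting point is integrability. From $\omega^t=a^t\,df+dh^t$ one computes
\[ d\omega^t=da^t\wedge df,\qquad \omega^t\wedge d\omega^t=dh^t\wedge da^t\wedge df, \]
so $\omega^t\wedge d\omega^t=0$ is equivalent to $da^t\wedge df\wedge dh^t=0$. Moreover $\alpha=\omega^t\wedge df=dh^t\wedge df$, so the hypothesis says that $S:=\{df\wedge dh^t=0\}\subset(\mathbb{C}^n\times\mathbb{C}_t,0)$ has codimension $\ge 2$. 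Off $S$ the map $\pi^t=(f,h^t)$ is a submersion onto $(\mathbb{C}^2,0)$, and because $n\ge 3$ its fibers are smooth and connected of positive dimension $n-2$; the relation $da^t\wedge df\wedge dh^t=0$ then says exactly that $da^t$ lies in the span of $df,dh^t$, i.e. that $a^t$ is constant along those fibers. A standard argument---constancy on the connected fibers of $\pi^t$ together with removability of the codimension-$\ge 2$ set $S$---now yields $a^t=A^t(f,h^t)$ for a holomorphic two-variable germ $A^t$. This is precisely where $n\ge 3$ and the codimension condition enter; for $n=2$ the fibers are points and no such dependence is forced, consistent with the theorem asserting (iii) only for $n\ge 3$.

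Next I would build the integral in the two-dimensional reduction. By the previous step $\omega^t=\pi^{t*}\tilde\omega^t$ with $\tilde\omega^t=A^t(u,v)\,du+dv$. Normalising the decomposition at $t=0$ by $a_0=1$, $h_0=0$ (legitimate since $\omega^0=df$) gives $A^0\equiv 1$, so the planar holomorphic equation $dv/du=-A^t(u,v)$ is regular at the origin; its solutions yield a first integral $\Psi^t(u,v)$ of $\tilde\omega^t$, normalised by $\Psi^0=u+v$. Indeed $d\Psi^t\wedge\tilde\omega^t=(\Psi^t_u-A^t\Psi^t_v)\,du\wedge dv$, so being a first integral is the equation $\Psi^t_u=A^t\Psi^t_v$. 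Pulling back, $F^t:=\Psi^t(f,h^t)$ satisfies $d_zF^t\wedge\omega^t=0$ and $F^0=\Psi^0(f,0)=f$. So far $F^t$ is a first integral of each $\omega^t$ reducing to $f$ at $t=0$, but only as a formal family, since (ii) supplies $a^t,h^t$, and hence $A^t$ and $\Psi^t$, merely in $\widehat{\mathcal O}_1$.

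The crux is therefore convergence in $t$: upgrading the formal $F^t$ to a genuine $F\in\mathcal O_{n+1}$. Here I would look for a holomorphic $b(z,t)$ making $\Omega:=\omega^t+b\,dt$ integrable on $(\mathbb{C}^{n+1},0)$. A direct computation reduces $\Omega\wedge d\Omega=0$ to the single linear equation
\[ \omega^t\wedge d_zb+b\,d_z\omega^t=\omega^t\wedge\partial_t\omega^t, \]
which at $t=0$ is solved by $b_0=h_1$ exactly because of (ii). Granting a holomorphic solution $b$, the singular set of $\Omega$ is controlled by $\alpha$, and the sharp codimension hypothesis is meant to force $\operatorname{codim}\operatorname{Sing}(\Omega)\ge 3$, the threshold in Malgrange's Frobenius theorem with singularities; that theorem then produces a holomorphic first integral $F$ of $\Omega$ on $(\mathbb{C}^{n+1},0)$. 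Extracting the $dt$-free part of $dF\wedge\Omega=0$ gives $d_zF\wedge\omega^t=0$, and a first integral of $\Omega$ nonconstant in $z$ (necessarily so, since otherwise $F_t\,\omega^t\wedge dt=0$ forces $F$ constant) restricts on each slice to a holomorphic first integral $F^t$ depending holomorphically on $t$; normalising gives $F^0=f$.

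I expect the genuine fight to be exactly the solvability of the $b$-equation, equivalently the summability in $t$: the order-by-order solution is dictated by (i)/(ii) but convergence is not automatic, and it is here that the codimension hypothesis on $\operatorname{Sing}(\alpha)$ is indispensable. Should the Malgrange route stall, the alternative is geometric and ties directly to (i): condition (i) trivialises the holonomy of $\mathcal F^t$ along the vanishing cycles $\gamma_c$, forcing the leaves to be closed, so by a Mattei--Moussu--Reeb argument each $\mathcal F^t$ possesses a holomorphic first integral, which varies holomorphically with $t$ because the family $\{\mathcal F^t\}$ does; uniqueness of the normalised integral identifies it with the formal $\Psi^t(f,h^t)$ above, again giving (iii).
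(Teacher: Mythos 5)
Your first half runs parallel to the paper: from (ii) and integrability you derive $da^t\wedge df\wedge dh^t=0$, use the codimension hypothesis to write $a^t=A^t(f,h^t)$, and then the planar reduction $A^t(u,v)\,du+dv$ yields a formal first integral $\Psi^t(f,h^t)$ --- this matches the paper's scheme. (Even here, note that $a^t,h^t$ are only \emph{formal} in $t$, so the statement that ``$\pi^t=(f,h^t)$ is a submersion off $S$'' has no slice-wise meaning for $t\neq 0$, and connectivity of the fibers of $(f,h^t)$ is not automatic; the paper replaces your ``standard argument'' by a parametric formal Stein factorization.) But the theorem also asserts the equivalence (i)$\iff$(ii) for all $n\geq 2$, which your proposal never proves: the paper obtains (i)$\Rightarrow$(ii) by expanding the one-form in power series and integrating over the explicit cycles $\gamma_c(s)=(x_oe^{is},cx_o^{-1}e^{-is})$, which isolates the resonant coefficients ($A_{k,k+1}=B_{k,k-1}$), then solving $\eta\wedge d(xy)=dh\wedge d(xy)$ convergently and dividing; for $n\geq 3$ this is transported from a generically embedded $(\mathbb C^2,0)$ to $(\mathbb C^n,0)$ by the Berthier--Cerveau relative-cohomology extension theorem. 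Taking (ii) as your starting point leaves half the statement unaddressed.

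The decisive gap is your convergence step. Your route --- find a holomorphic $b(z,t)$ making $\Omega=\omega^t+b\,dt$ integrable on $(\mathbb C^{n+1},0)$, then invoke Malgrange at the codimension-$3$ threshold --- fails at both hinges. Solvability of the $b$-equation \emph{is} the hard unfolding problem: checking it at $t=0$ settles nothing beyond order zero, and you concede the point (``granting a holomorphic solution $b$''), so the proof stops exactly at its crux. Moreover, even granted $b$, one has $\sing(\Omega)=\{\omega^t=0\}\cap\{b=0\}$, which is not controlled by the hypothesis $\codim\sing(\omega^t\wedge df)\geq 2$, so the codimension-$3$ input to Malgrange's theorem for a single $1$-form is unjustified. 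The paper avoids both difficulties: it never seeks $b$, but works with the rank-two Pfaffian system $\mathcal S$ generated by $\Omega(x,y,z,t):=\omega^t(x,y,z)$ (holomorphic in all variables, since the deformation is analytic in $t$) and $dt$. This system is automatically integrable, $\Omega\wedge d\Omega\wedge dt=0$; it carries the formal first integral $(\hat F_o(f,\hat H,t),t)$ coming from the pulled-back nonsingular model, exactly as in your planar reduction; and its singular set has codimension $\geq 2$. Malgrange's theorem in the general (systems) case then upgrades this \emph{formal} first integral to a holomorphic one of the form $(F(x,y,z,t),t)$ --- formal-to-convergent at the codimension-$2$ threshold, which is precisely the input available. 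Finally, your fallback is unsound: the cycles $\gamma_c$ in (i) lie in the fibers $(xy=c)$, which are not leaves of $\omega^t$ for $t\neq 0$, so vanishing of $\oint_{\gamma_c}\omega^t$ says nothing directly about the holonomy of $\mathcal F^t$, and no Mattei--Moussu/Reeb closed-leaf argument can be launched from it.
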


Theorem~\ref{Theorem:I} can be seen as a version of the main result in \cite{scarduaIJM}, for the case
of non-dicritical singularities in dimension two. It is also connected to the original striking ideas and
spirit from \cite{Ilyashenko}.

In dimension two, we shall refer to the writing $\omega^t(x,y)=a^t(x,y)d (xy) + dh^t(x,y)$ as a
{\it standard  form} of the deformation. Notice that we have no control on the linear part
of the one-form $\omega^t$, i.e., it may not be a saddle.
This may kill any hope of using Martinet-Ramis (formal or analytic) normal forms for a further study of the above question.
Another point is that the above writing is not unique.
Nevertheless, we believe that in the case of dimension two either of the above conditions occurs then  there is a {\em one-parameter holomorphic first integral} for $\omega^t$.

Let us state a simplification of our result above. It is not difficult to see that, given any constant $c\in \mathbb C \setminus \{0\}$, the local 1-homology of the
level $(f_c) : xy=c$ is generated by a single closed path  $\tilde{\gamma}_c(t)=(x_o e ^{it}, c x_o^{-1} e^{-it}), \,
0 \leq t \leq 2 \pi$ where $x_o \neq 0$ is fixed and $i^2 =-1$.
In particular, we obtain as an immediate consequence of Theorem~\ref{Theorem:I}:

\begin{Corollary}
Given an analytic deformation of the complex saddle $d(xy)=0$ in
$(\mathbb C^2 \times \mathbb C^{n-2},0), \, n \geq 2$. Assume that $\sing({\omega}^t \wedge df)$ has codimension $\ne 1$ at $0\in \mathbb C^n \times \mathbb C_t$. Then $\omega^t$ admits a one-parameter holomorphic first integral  provided that we have $\oint_{\tilde{\gamma}_c} \omega^t= 0, \forall 0 \ne c \approx 0, \forall t\in \mathbb C_t$.

\end{Corollary}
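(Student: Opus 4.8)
The plan is to derive the Corollary from Theorem~\ref{Theorem:I} by upgrading the single hypothesis $\oint_{\tilde\gamma_c}\omega^t=0$, imposed for all small $c\neq 0$ and all $t$, to condition (i) of that theorem, namely $\oint_{\gamma_c}\omega^t=0$ for \emph{every} cycle $\gamma_c\subset(f_c)$. Once (i) is in hand, the hypothesis that $\sing(\omega^t\wedge df)$ has codimension $\neq 1$ allows us to invoke the implication (i)$\Rightarrow$(iii) of Theorem~\ref{Theorem:I} (for $n\geq 3$; the two-dimensional case is the one discussed separately in the text) and obtain the one-parameter holomorphic first integral $F$ with $F^0=f$. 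So the entire content of the Corollary is the reduction ``a single generating cycle suffices''.

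First I would record the topology of the fibre: for $c\neq 0$ the germ $(f_c)=\{xy=c\}$ is biholomorphic near $0$ to $\mathbb{C}^*\times\Delta^{n-2}$ via $(x,y,z)\mapsto(x,z)$ with $y=c/x$, hence homotopy equivalent to $S^1$, so $H_1((f_c);\mathbb{Z})\cong\mathbb{Z}$ is generated by $\tilde\gamma_c$. Thus every cycle $\gamma_c$ is freely homotopic in $(f_c)$ to $\tilde\gamma_c^{\,m}$ for a unique $m\in\mathbb{Z}$, realised by a cylinder $S\subset(f_c)$ with $\partial S=\gamma_c-m\,\tilde\gamma_c$. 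Stokes' theorem then gives
\[
\oint_{\gamma_c}\omega^t-m\oint_{\tilde\gamma_c}\omega^t=\int_S \iota_c^*\,d\omega^t,\qquad \iota_c\colon(f_c)\hookrightarrow\mathbb{C}^n,
\]
so everything comes down to controlling the correction term $\int_S \iota_c^*\,d\omega^t$.

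The key step, and the main obstacle, is exactly this correction term. One cannot simply invoke homotopy invariance of the period: writing $\omega^t=df+\sum_j t^j\omega_j$ and using integrability $\omega^t\wedge d\omega^t=0$ yields $df\wedge d\omega^t=-\bigl(\sum_j t^j\omega_j\bigr)\wedge d\omega^t=O(t^2)$, so $\iota_c^*\omega^t$ need \emph{not} be closed and the period is \emph{not} a homology invariant a priori. I therefore expect the honest route to be to re-run the construction behind Theorem~\ref{Theorem:I} order by order, checking that only the generating period enters. At first order $df\wedge d\omega_1=0$, and the nondegeneracy of $\sing(\omega^t\wedge df)$ supplies the division $d\omega_1=df\wedge\beta_1$, whence $\iota_c^*\,d\omega_1=0$ and $\oint_{\gamma_c}\omega_1=m\oint_{\tilde\gamma_c}\omega_1=0$. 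Feeding this into the second-order relation $df\wedge d\omega_2+\omega_1\wedge d\omega_1=0$ and proceeding inductively, the vanishing of the single generating period at each order should propagate to produce the primitive, i.e.\ the standard form $\omega^t=a^t df+dh^t$ of condition (ii).

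Granting this, the conclusion is immediate. Condition (ii) makes $\iota_c^*\omega^t=d(\iota_c^*h^t)$ exact, which retroactively forces $\oint_{\gamma_c}\omega^t=0$ for \emph{all} cycles, i.e.\ condition (i); and since $\sing(\omega^t\wedge df)$ has codimension $\neq 1$, Theorem~\ref{Theorem:I} then furnishes the one-parameter holomorphic first integral, proving the Corollary. The delicate point to monitor throughout the induction is precisely the place where the codimension hypothesis on $\sing(\omega^t\wedge df)$ guarantees the successive divisions $d\omega_j=df\wedge\beta_j+(\text{lower order})$, which is what converts the vanishing of the single period $\oint_{\tilde\gamma_c}\omega^t$ into an actual primitive rather than merely a formal relation.
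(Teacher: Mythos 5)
Your overall route is the paper's own: since $H_1((f_c);\mathbb Z)\cong\mathbb Z$ is generated by $\tilde\gamma_c$, upgrade the single-period hypothesis to condition (i) of Theorem~\ref{Theorem:I} and conclude by (i)$\Rightarrow$(iii). The paper treats this upgrade as immediate, and within its own logic it is: in the proof of Lemma~\ref{Lemma:coefficientforms} integrability is claimed to yield $d\omega^t\wedge d(xy)=0$, so the restriction of $\omega^t$ to each fiber $xy=c\neq 0$ is closed and its periods depend only on the homology class of the cycle; moreover the two-dimensional engine, Lemma~\ref{Lemma:II}, only ever integrates over the explicit generator $\gamma_c(t)=(x_oe^{it},cx_o^{-1}e^{-it})$, so for $n=2$ no reduction is needed at all, and for $n\geq 3$ one only has to transfer the hypothesis to the generating cycle of a generic two-plane section (Lemma~\ref{Lemma:corollary}). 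You are right to be suspicious of the closedness claim --- your computation $df\wedge d\omega^t=O(t^2)$ is correct, and at order two integrability gives only $df\wedge d\omega_2=-\omega_1\wedge d\omega_1$ --- but having rejected that step, your proposal must supply a substitute, and this is where it has a genuine gap.

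Concretely, your induction closes at order one: there $df\wedge d\omega_1=0$, and pointwise division along $\{df\neq 0\}$ (not the hypothesis on $\sing(\omega^t\wedge df)$, which plays no role here --- it is needed only later, in the Stein factorization step of Lemma~\ref{Lemma:III}) gives $\iota_c^*\,d\omega_1=0$, hence homology invariance of the periods of $\omega_1$. But it does not propagate as you assert. At order two, with $\omega_1=a_1\,df+dh_1$, the integrability relation becomes $df\wedge\bigl(d\omega_2+dh_1\wedge da_1\bigr)=0$, so the form closed along the fibers is $\omega_2+h_1\,da_1$, not $\omega_2$; its period over an arbitrary cycle reduces to $m\oint_{\tilde\gamma_c}\bigl(\omega_2+h_1\,da_1\bigr)$, and the correction $\oint_{\tilde\gamma_c}h_1\,da_1$ is not controlled by your hypothesis. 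Hence the vanishing of $\oint_{\tilde\gamma_c}\omega_2$ no longer transfers to the slice cycles needed for Lemma~\ref{Lemma:corollary}. You would need the extra relation $da_1\wedge dh_1\wedge df=0$; but deriving it from the standard form of $\omega_2$ is circular, since that standard form is exactly what the induction is trying to produce at this order. So the ``should propagate'' step fails as written at order two: either justify the paper's claim that $d\omega^t\wedge d(xy)=0$ for the whole family (the route the paper actually takes, which makes the Corollary a one-line consequence of Theorem~\ref{Theorem:I}), or find a genuinely different mechanism to control these correction periods.
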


\begin{Remark}
{\rm
Let us give a word about the meaning of the hypothesis {\em $\codim$ $\sing(\omega^t \wedge df)\ne 1$ at the origin $0\in \mathbb C^n \times \mathbb C_t$}. Assume that we are in the standard form case, ie., $\omega^t = a^t df + dh^t$ for some holomorphic germs
$a^t, h^t$ depending analytically on the parameter $t$. Then we have
$\omega^t \wedge df = dh^t \wedge df$. Let us assume that $n=3$. Hence  $f=xy$ in $\mathbb C^3$ with coordinates $(x,y,z)$. We have $\omega^t \wedge df = \omega^t \wedge d(xy) = dh^t \wedge d(xy)= (x(h^t)_x - y (h^t)_y)dx \wedge dy + y( h^t)_z dz \wedge dx + x(h^t)_z dz \wedge dy$. In this case the set
$Z:=\sing(\omega^t \wedge df) $ is given by the system of three equations
$x(h^t)_x - y (h^t)_y =0, \, x (h^t)_z=0, \, y (h^t)_z=0$. Therefore, either $h$ does not depend on $z$ and $\codim Z \leq 1$ or  we have $\codim Z \geq 2$. If $h=h(x,y)$ then $Z$ is given by the single equation
$x(h^t)_x - y (h^t)_y =0$ and therefore
either $Z$ is (contains) an open neighborhood of the origin or $\codim Z =1$. In the first case, by the local version of Stein factorization theorem (\cite{mattei-moussu} page 472 or \cite{Gr-Re})  we have $h^t(x,y) = \ell^t(xy)$ for some one variable holomorphic function
$\ell^t(z)$ depending analytically on the parameter $t$. Then we conclude that $\omega^t= a^t d(xy) + d \ell^t (xy)$ which implies that $xy$ is a first integral for $\omega^t$. In the second case $x(h^t)_x - y (h^t)_y$ is a nontrivial analytic germ. }
\end{Remark}

Our result above is sharp with respect to the condition $\codim \sing(\omega^t \wedge d(xy))\geq 2$ and
 on the vanishing of the integrals $\int_{\gamma_c} \omega^t$, as shown by Examples~\ref{Example:0}, \ref{Example:1} and \ref{Example:3}. Theorem~\ref{Theorem:I} is also not valid for deformations of the ramified saddle
 $x^n y^m =const$ as it  is shown by Example~\ref{Example:2}.

  Our results can be connected to a kind of a local version of Ilyashenko's celebrated result about limit cycles of two-dimensional  analytic ordinary differential equations (\cite{Ilyashenko}). Theorem~\ref{Theorem:I} also has common points with  the work of Mucino \cite{Mucino} about deformations of meromorphic pencils.
 It is  gives a test for the existence of first integrals in terms of computing line integrals which is quite practical.

\subsection{Real analytic case}

The second part of this work is dedicated to the problem of integrability of certain singularities related to real analytic centers.
The classical Lyapunov-Poincaré center theorem (\cite{Lyapunov,moussu,Poincare}) assures the existence of a first integral for an analytic 1-form near a center singularity in dimension two, provided that the first jet of the 1-form is nondegenerate.
We shall consider a real analytic differential 1-form $\omega(x,y)=a(x,y)dx + b(x,y)dy$ defined in a neighborhood of the
origin $0 \in \mathbb R^2$. Recall that the
1-form $\omega$ has a {\it center} at $0\in \mathbb{R}^2$ if all leaves in a punctured neighborhood of the
origin are diffeomorphic to the circle. The  form $\omega$ has a {\it real analytic first integral}
if $\omega=gdf$ for some real analytic function germs $f,g$ at $0\in\mathbb{R}^2$, with $g(0)\neq 0$. If moreover  $f$  has a Morse type singular point at the origin, then the form has a real analytic first integral {\it in the  strong sense}.

\begin{Theorem}[\cite{Lyapunov,Poincare}]
\label{Theorem:centertheorem} Consider a germ of a real analytic 1-form $\omega= a(x,y)dx+
b(x,y)dy$ at the origin $0\in \mathbb{R}^2$, having an isolated singularity for its first jet $\omega_1$, and a center at the origin. Then $\omega$ admits a first integral in the strong sense.
\end{Theorem}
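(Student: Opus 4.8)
The plan is to move the problem into the complex domain, recognize the real center as a resonant complex saddle with trivial holonomy, extract a convergent holomorphic first integral by the Mattei--Moussu mechanism, and then descend back to the real picture to obtain a Morse first integral. This strategy fits the present circle of ideas, since after complexification the linearization becomes exactly the unramified saddle $d(uv)=0$ studied in this paper.

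First I would normalize the linear part. The dual vector field $X=-b\,\partial_x+a\,\partial_y$ (whose trajectories are the leaves of $\omega$) has an isolated linear singularity, and the center condition forces its eigenvalues to be purely imaginary $\pm i\beta$ with $\beta\neq 0$. After a real linear change of coordinates I may assume $\omega_1=\tfrac12\,d(x^2+y^2)$, so that to first order the foliation is rotation invariant; in particular any first integral I produce should carry a definite quadratic leading term, which is precisely the Morse (``strong sense'') condition I am aiming for. Setting $u=x+iy$, $v=x-iy$ gives $x^2+y^2=uv$, so the complexified one-form $\omega^{\mathbb{C}}$ has linear part $d(uv)$: the standard complex saddle of eigenvalue ratio $-1$, i.e.\ the $n=2$ case of the singularity $d(xy)=0$ of the Introduction.

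Next I would translate the center hypothesis into a statement about holonomy. The holonomy of a separatrix of a saddle of ratio $\lambda=-1$ is a germ $h\in\mathrm{Diff}(\mathbb{C},0)$ whose linear part is $e^{2\pi i\lambda}=1$, so $h$ is tangent to the identity. The key point is that the real orbits of $X$, being closed loops encircling the origin, make the Poincar\'e first-return map of $X$ on a real half-transversal equal to the identity; identifying this real return map with the restriction of $h$ to a real arc, and using that a holomorphic germ fixing a real curve pointwise must be the identity, I conclude $h=\mathrm{Id}$. Thus the separatrix holonomy of the complexified saddle is trivial. By the Mattei--Moussu criterion (the local integrability theorem, cf.\ \cite{mattei-moussu}), a saddle with trivial/finite holonomy admits a convergent holomorphic first integral; here triviality forces it to be $F=uv$ up to a holomorphic unit, i.e.\ the complexified foliation is analytically linearizable. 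By reality of the original data and uniqueness of the normalization, $F$ is real on the slice $v=\bar u$ and hence descends to a real analytic germ $f(x,y)$ with $f(0)=0$, $df(0)=0$ and nondegenerate Hessian. Since $\omega$ and $df$ define the same foliation and $f$ is Morse, I get $\omega=g\,df$ with $g(0)\neq 0$, a first integral in the strong sense.

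The main obstacle is exactly the passage from \emph{formal} to \emph{convergent} hidden in the use of Mattei--Moussu: producing a genuinely holomorphic (not merely formal) first integral is the analytic heart of the theorem, and it is here that the center hypothesis must be used in full strength, namely to force the holonomy to be \emph{exactly} the identity rather than only formally so. A secondary delicate point is the correspondence of Step~2 between the real closed orbits and the complex vanishing cycle $\tilde\gamma_c\subset\{uv=c\}$: one must match the real return map with the complex holonomy and control the common domain of definition on which the analytic continuation argument applies. An alternative, more computational route would construct a formal first integral directly by successively eliminating resonant terms and checking that the center condition kills all the Poincar\'e--Lyapunov focal quantities, but that approach faces the same convergence difficulty and is less transparent than the holonomy argument above.
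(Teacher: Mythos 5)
Your proposal is essentially correct, but note first that the paper itself offers \emph{no} proof of Theorem~\ref{Theorem:centertheorem}: it is quoted as a classical result with attributions to Lyapunov and Poincar\'e, and the bibliography also points to Moussu's geometric proof \cite{moussu}. What you have reconstructed is, in all essentials, exactly that geometric proof of Moussu: complexify, observe that the nondegenerate center becomes a resonant saddle with eigenvalue ratio $-1$ and separatrix holonomy $h$ tangent to the identity, use the real closed orbits to force $h=\Id$, invoke Mattei--Moussu \cite{mattei-moussu} (or equivalently the Martinet--Ramis classification \cite{martinet-ramisresonant}, for which trivial holonomy means analytic linearizability) to get a convergent first integral $F$ with $F=uv$ in suitable coordinates, and descend. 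So this is not a new route relative to the literature, though it is more informative than the paper, which simply imports the statement. Three points you flag or gloss deserve to be written out if this were a full proof: (a) the reduction of the linear part, i.e.\ that an isolated singularity of $\omega_1$ together with the center hypothesis forces purely imaginary eigenvalues --- one must rule out real spectrum (hyperbolic saddle, node) and nonzero real part (focus), all of which are incompatible with every nearby orbit being closed; (b) the identification of the return map with the holonomy: the clean formulation is that for real $c>0$ the real closed orbit lies in the complexified leaf and is freely homotopic to the vanishing cycle $\tilde\gamma_c$, so the separatrix holonomy fixes every point of a real-analytic arc in the complex transversal (the trace of the real half-transversal), whence $h=\Id$ by the identity principle --- precisely the delicate step you identify; and (c) the realification of $F$: rather than appealing vaguely to ``uniqueness of the normalization,'' it is cleaner to symmetrize with the antiholomorphic involution $\sigma(u,v)=(\bar v,\bar u)$ fixing the real slice (e.g.\ replace $F$ by a suitable combination of $F$ and $\overline{F\circ\sigma}$), after which the real germ $f$ is Morse with leading term $x^2+y^2$, and the division lemma (using that $df$ has an isolated zero and $\omega\wedge df=0$) yields $\omega=g\,df$ with $g(0)\neq 0$, i.e.\ a first integral in the strong sense.
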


In $\mathbb R^2 \times \mathbb R^{n-2}$ we consider coordinates $(x,y,z)=(x,y,z_1,...,z_{n-2})$.
We shall refer as {\it center-cylinder} singularity at the origin $0\in \mathbb R^2 \times \mathbb R^{n-2}$
to a product type singularity of a center singularity germ at $0\in \mathbb R^2$ by the axis $\mathbb R^{n-2}$.
This corresponds to a germ of a foliation with leaves diffeomorphic with the cylinder $S^1 \times \mathbb R^{n-2}$
and singular set consisting of the axis $\{0\}\times \mathbb R^{n-2}$.
The center-cylinder singularity is {\it quadratic} if
the center is given by the levels of a nondegenerate function in the real plane $\mathbb R^2$.

Given a germ of real analytic $q$-form $\alpha$ at $0\in \mathbb R^n$,
the {\it complex singular set} of $\alpha$ is defined as the
germ of complex analytic set at $0\in \mathbb C^n$ given by
the zeroes of the complexification of $\alpha$.
Following our main result Theorem~\ref{Theorem:I} we obtain the following application to this
real framework:

 For each $r>0$ denote by $\delta_r$ the circle $x^2 + y^2 = r^2$ in the real plane $\mathbb R^2$. Since any simple curve around the origin $0\in \mathbb R^2$ is, up to a change of orientation, homotopic to $\delta_r$ in $\mathbb R^2 \setminus \{0\}$, we have:

\begin{Corollary}
\label{Corollary:center} Let $\omega^t(x,y,z)= d(x^2 + y^2) + \sum\limits_{j=1}^\infty t^j \omega_j(x,y,z)$ be an analytic deformation of the
quadratic center-cylinder  singularity $d(x^2 + y^2) =0$ in $\mathbb R^2 \times \mathbb R^{n-2}, \, n \geq 2$ where $t \in (\mathbb R,0)$.
 Assume that $\oint_{\delta_r}\omega^t=0,\forall t, \forall r>0$ small enough. Then  we have $\omega^t(x,y,z)=a^t(x,y,z)d (x^2 + y^2) + dh^t(x,y,z)$ for some one-parameter families of real
analytic function germs $a^t, h^t$.

If moreover $(i) \, n \geq 3$ and $(ii)$ the complex singular set of $\omega^t \wedge d(x^2 + y^2)$ has codimension $\neq 1$ at the
  origin, 
then the deformation admits a one-parameter analytic first integral
$F^t(x,y,z) = x^2 + y^2 +  \sum\limits_{j=1}^\infty t^jF_j(x,y,z)$.
\end{Corollary}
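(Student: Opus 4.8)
\emph{Proof plan.} The strategy is to reduce everything to the complex saddle case of Theorem~\ref{Theorem:I} by complexifying and then applying the linear change of coordinates that diagonalizes the quadratic form $x^2+y^2$. Over $\bc$ we have $x^2+y^2=(x+iy)(x-iy)$, so setting $u=x+iy$, $v=x-iy$ (and keeping $z$) gives holomorphic coordinates in which $x^2+y^2=uv$ and hence $d(x^2+y^2)=d(uv)$. First I would complexify the family: each real analytic $\omega_j$ extends to a holomorphic one--form on a common complex neighborhood of $0\in\bc^n$, and the resulting $(\omega^t)^{\bc}=d(uv)+\sum_{j\geq1}t^j\omega_j^{\bc}$ is an analytic integrable deformation of the complex saddle $d(uv)=0$ (the identity $\omega^t\wedge d\omega^t=0$ is polynomial in the coefficients, hence survives both complexification and the linear change of coordinates). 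Likewise the complex singular set of $\omega^t\wedge d(x^2+y^2)$ is by definition the zero set of $(\omega^t)^{\bc}\wedge d(uv)$, and since a linear change of coordinates preserves codimension, the hypothesis $\codim\neq1$ for $n\geq3$ is exactly the hypothesis on $\alpha=(\omega^t)^{\bc}\wedge d(uv)$ required by Theorem~\ref{Theorem:I}.

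Next I would translate the integral hypothesis. Parametrizing $\delta_r$ by $x=r\cos\te$, $y=r\sin\te$, $0\leq\te\leq2\pi$, the coordinate change gives $u=re^{i\te}$, $v=re^{-i\te}$, so the complexified circle $\delta_r$ is precisely the generating cycle $\tilde\ga_c$ of the fiber $uv=c$ with $x_o=r$ and $c=r^2$ described in the text. Thus $\oint_{\delta_r}\omega^t=\oint_{\tilde\ga_{r^2}}(\omega^t)^{\bc}$, and the hypothesis says this period vanishes for every small $r>0$, i.e.\ for every $c=r^2$ in an interval $(0,\ve)$ of the positive real axis. The period $I(c,t)=\oint_{\tilde\ga_c}(\omega^t)^{\bc}$ depends holomorphically on $c$ on a punctured neighborhood of $0$; since it vanishes on a set with a limit point in its domain, the identity principle forces $I(c,t)\equiv0$ for all small $c\neq0$. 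Because the local $1$--homology of the fiber $uv=c$ is generated by $\tilde\ga_c$ (as recorded in the text preceding the first Corollary), this gives $\oint_{\ga_c}(\omega^t)^{\bc}=0$ for every cycle $\ga_c$, that is, condition (i) of Theorem~\ref{Theorem:I} holds for $(\omega^t)^{\bc}$.

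Now I would invoke Theorem~\ref{Theorem:I}. The implication (i)$\Rightarrow$(ii) yields $(\omega^t)^{\bc}=A^t\,d(uv)+dH^t$ for holomorphic (formal in $t$) germs $A^t,H^t$, and when $n\geq3$ with $\codim\neq1$ the implication to (iii) yields a holomorphic first integral $F(u,v,z,t)$ with $F^0=uv$. It then remains to descend these complex objects to the real analytic category. The real slice $\re^n$ is the fixed locus of the antiholomorphic involution $\tau(u,v,z)=(\bar v,\bar u,\bar z)$, and that $\omega^t$ is real analytic means exactly that $(\omega^t)^{\bc}$ is $\tau$--real (and $uv$ is $\tau$--invariant as well). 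I would then symmetrize: replacing $F$ by $\tfrac12\big(F+\overline{F\circ\tau}\big)$ produces a $\tau$--invariant holomorphic first integral, since the set $\{G:dG\wedge(\omega^t)^{\bc}=0\}$ is closed under addition and $\overline{F\circ\tau}$ is again a first integral because $(\omega^t)^{\bc}$ is $\tau$--real; its restriction to $\re^n$ is real valued, real analytic, and of the form $x^2+y^2+\sum_{j\geq1}t^jF_j$ after the normalization $F^0=uv\mapsto x^2+y^2$. The same averaging applied to the pair $A^t,H^t$ yields a $\tau$--invariant standard form whose restriction to $\re^n$ gives the desired real analytic $a^t,h^t$ with $\omega^t=a^t\,d(x^2+y^2)+dh^t$, valid for every $n\geq2$.

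The routine points (convergence of the complexified series on a common polydisc, holomorphic dependence of $I(c,t)$ on $c$) are standard. The step I expect to require the most care is the descent: one must verify that the symmetrization is compatible with the non--uniqueness of the standard form, so that averaging two decompositions again produces a decomposition, and that it preserves both the normalization $F^0=x^2+y^2$ and the prescribed leading terms. This is precisely where the real structure $\tau$ does the essential work, and it is the main obstacle to a fully clean argument.
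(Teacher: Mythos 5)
Your proposal is correct and, in substance, follows the same route as the paper: complexify via $u=x+iy$, $v=x-iy$ so that $x^2+y^2$ becomes the saddle $uv$, transfer the hypothesis $\oint_{\delta_r}\omega^t=0$ (which only sees the fibers $c=r^2>0$) to the vanishing of all periods by the identity principle, and then invoke Theorem~\ref{Theorem:I} together with the observation that the complex singular set hypothesis is exactly the codimension hypothesis of that theorem. Indeed, the identity-principle step you use is performed verbatim in the paper, inside the proof of Lemma~\ref{Lemma:center}, where $c\mapsto\int_{\gamma_c}\eta$ is treated as a holomorphic function on a punctured disc vanishing on $(0,\ve)$. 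The differences are ones of sequencing and explicitness. The paper first establishes the \emph{real} standard form coefficientwise, $\omega_j=a_j\,d(x^2+y^2)+dh_j$ (Lemmas~\ref{Lemma:center} and \ref{Lemma:corollarycenter}), and only then complexifies, introducing the complex parameter $T$ with $\Re T=t$; the payoff is that for $\Omega^T=d(zw)+\sum_j T^j(A_j\,d(zw)+dH_j)$ the vanishing $\int_{\gamma_c}\Omega^T=0$ is then automatic from the standard form, for \emph{all} complex $T$. You instead apply the identity principle directly to the complexified family; here note a small point you gloss over: Theorem~\ref{Theorem:I} is stated for a complex parameter, while your hypothesis gives $I(c,t)=0$ only for real $t$, so you need either a second application of the identity principle in the (complexified) parameter $T$, or the coefficientwise reduction $\int_{\gamma_c}\omega^t=0\ \forall t\iff\int_{\gamma_c}\omega_j=0\ \forall j$ from Section~2 of the paper --- a one-line fix, but it should be said. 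Conversely, your descent step is more careful than the paper's: where the paper simply asserts that the decomplexification admits a real analytic first integral, you make this precise via the involution $\tau(u,v,z)=(\bar v,\bar u,\bar z)$ and the symmetrization $F\mapsto\tfrac12\bigl(F+\overline{F\circ\tau}\bigr)$; and your own worry about compatibility with the non-uniqueness of the standard form dissolves, since both the condition $dG\wedge(\omega^t)_{\bc}=0$ and the decomposition $\eta=a\,d(uv)+dh$ are (affine-)linear in the unknowns, so averaging two valid objects yields a valid object, while $\tau$-invariance of $uv$ preserves the normalization $F^0=uv\mapsto x^2+y^2$.
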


We stress the fact the above statement does not assume that the 
linear part of the deformation is kept to be of the form 
$d(x^2 + y^2)$. 

\section{Analytic deformations of foliations}
\label{section:equations}

Before going into our results  we shall  explain notations and some concepts we use in the text.
We first recall that  the ring $\mathcal O_n$ of germs at $0\in \mathbb C^n$ of
holomorphic functions is an integral domain (the product of two elements of $\mathcal O_n$ distinct from the zero element can never be the zero element).  A unit is an element in the ring $\mathcal O_n$  having a multiplicative inverse in $\mathcal O_n$. This is precisely the set of germs of  functions which are nonzero at the origin. Since the nonunits form an ideal of $\mathcal O_n$ it
is a local ring. Furthermore, $\mathcal O_n$  is a unique factorization domain (cf. Theorem A.7 page 7 \cite{gunning2}).  A holomorphic function germ $f\in \mathcal O_n$ is called  {\it primitive}
if given a representative $f_D\colon D \to \mathbb C$ defined in a small disc $D\subset \mathbb C^n$
centered at the origin, then $f_D$ has connected fibers. In view of the local version of Stein factorization theorem (\cite{mattei-moussu} page 472 or \cite{Gr-Re}) this means that if we have another function $\tilde f \colon D \to \mathbb C$ such that $\tilde f$ is constant on each fiber $f_D^{-1}(c), c \in \mathbb C$ of $f_D$, then
$\tilde f$ can be  factorized as $\tilde f= \xi \circ f_D$ for some holomorphic map $\xi \colon U\subset \mathbb C \to V\subset \mathbb C$. Finally, in terms of germs, a germ $f \in \mathcal O_n$ is {\it primitive} if it is not a power, i.e., $f$ is not of the form $g^r$ for some $g \in \mathcal O_n$ and some $r \in \mathbb N, r \geq 2$ (see the factorization theorem in \cite{mattei-moussu} page 472).

\subsection{Deformations in dimension two}

 We start with the case of a germ a holomorphic one-form $\omega$ at $0\in \mathbb C^2$. In dimension two the integrability condition $\omega \wedge d \omega=0$ plays no role.
 A holomorphic function $f \in \mathcal O_2$ (respectively, a formal function $\hat f\in \hat {\mathcal O}_2$)  is called a {\it first integral} for $\omega$ if it is not constant and satisfies $df \wedge \omega=0$ (respectively, $d\hat  f \wedge \omega=0$). According to Malgrange and Mattei-Moussu (\cite{malgrangeI},\cite{mattei-moussu}),  if the origin is an isolated singularity for $\omega$, then the existence of a formal first integral is equivalent to the existence of a holomorphic one.

We consider a deformation  $\omega^{t}$ as in Theorem~\ref{Theorem:I}. Putting $\omega_0= df=d(xy)$
we have
\[
\omega^{t} = \omega_0 + \sum\limits_{j=1} ^\infty  t^j \omega_j\,.
\]
For each cycle $\gamma_c\subset (f_c)$ we have
\[
\int_{\gamma_c} \omega^t = \int_{\gamma_c} \omega_0
+ \sum \limits_{j=1}^\infty t^j \int _{\gamma_c} \omega_j.
\]
Therefore,  $\int_{\gamma_c}{\omega}^t=0, \forall t$ is equivalent to
$\int_{\gamma_c} \omega_j=0, \forall j\geq 0$.

Put now $\Omega^t:=\omega^t /f$ and $\Omega_j:=\omega_j /f$ where $f=xy$.
Then, because $f\equiv c$ in $\gamma_c$,  we have
\[
\int_{\gamma_c}{\omega}^t=0,\forall t \implies \int_{\gamma_c}\Omega_j =0, \forall j\geq 1.
\]

Now we need a lemma about relative cohomology (see also \cite{cerveau-berthier}):

\begin{Lemma}
\label{Lemma:II}
We consider a holomorphic one-form $\eta(x,y)$ germ at $0\in \mathbb C^2$. Then the following conditions are equivalent:
\begin{enumerate}[{\rm(i)}]
\item $\eta(x,y)$ writes as $\eta(x,y)= a(x,y) d(xy) + dh(x,y)$ for some holomorphic
functions $a(x,y), \, h(x,y)$ germs.

\item $\int_{\gamma_c} \eta =0, \forall \gamma_c \subset (f_c), \, \forall c \approx 0.$
\end{enumerate}

If we have an analytic  one-parameter  family $\{\eta^t\}_{t \in \mathbb C_t}$ such that $\int_{\gamma_c} \eta^t=0, \forall \gamma_c, \subset (f_c), \, \forall c \approx 0, \, \forall t \in \mathbb C_t $, then we can choose
$a^t, h^t$ as a formal family with respect to  the parameter $t$.

\end{Lemma}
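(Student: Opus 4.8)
The plan is to treat the two implications separately and to reduce the whole question to a single first-order linear partial differential equation whose solvability is governed by an explicit, diagonalizable operator. The implication $\mathrm{(i)}\Rightarrow\mathrm{(ii)}$ is immediate: if $\eta=a\,d(xy)+dh$, then along any $\gamma_c\subset(f_c)$ the function $xy$ is constant, so the pullback of $d(xy)$ to $\gamma_c$ vanishes and $\int_{\gamma_c}a\,d(xy)=0$, while $\int_{\gamma_c}dh=0$ because $\gamma_c$ is closed.

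For the substantive direction $\mathrm{(ii)}\Rightarrow\mathrm{(i)}$ I would write $\eta=P\,dx+Q\,dy$ and seek $a,h$ with $h_x=P-ay$ and $h_y=Q-ax$. The Schwarz compatibility $(h_x)_y=(h_y)_x$ is equivalent to
\[ P_y-Q_x=(y\partial_y-x\partial_x)\,a=:Ea. \]
The operator $E$ is diagonal on monomials, $E(x^m y^n)=(n-m)\,x^m y^n$, so its kernel is $\mathbb C\{xy\}$ and its cokernel is spanned by the diagonal monomials $x^k y^k$. Hence $Ea=P_y-Q_x$ is solvable precisely when $P_y-Q_x$ carries no diagonal part, and in that case $a$ is produced term by term by dividing the coefficient of $x^m y^n$ by $n-m$; since $|n-m|\ge 1$ this division does not enlarge the coefficients, so $a$ is again convergent, and then $h$ is recovered by integrating the now-closed holomorphic form $(P-ay)\,dx+(Q-ax)\,dy$.

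It remains to match this analytic obstruction with the period condition. Parametrising the generator of $H_1((f_c))$ by $\gamma_c(s)=(x_0 e^{\sqrt{-1}\,s},\,c\,x_0^{-1}e^{-\sqrt{-1}\,s})$ gives $dx=\sqrt{-1}\,x\,ds$ and $dy=-\sqrt{-1}\,y\,ds$, whence, writing $xP-yQ=\sum b_{mn}x^m y^n$,
\[ \oint_{\gamma_c}\eta=\sqrt{-1}\int_0^{2\pi}(xP-yQ)\big|_{\gamma_c}\,ds=2\pi\sqrt{-1}\sum_{k\ge 0}b_{kk}\,c^k. \]
Thus $\mathrm{(ii)}$ holds iff the diagonal part of $xP-yQ$ vanishes. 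A short bookkeeping then shows that the coefficient of $x^k y^k$ in $P_y-Q_x$ equals $(k+1)$ times the coefficient of $x^{k+1}y^{k+1}$ in $xP-yQ$, so the diagonal of $P_y-Q_x$ vanishes exactly when that of $xP-yQ$ does. This identifies the period obstruction with the cokernel of $E$ and closes the equivalence.

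For the parametric statement I would expand $\eta^t=\sum_j t^j\eta_j$; since $\oint_{\gamma_c}\eta^t=0$ for all $t$ forces $\oint_{\gamma_c}\eta_j=0$ for every $j$, the construction above applies to each $\eta_j$, and fixing the canonical diagonal-free solution $a_j$ renders the choice unambiguous. Assembling $a^t=\sum_j t^j a_j$ and $h^t=\sum_j t^j h_j$ yields the desired families, formal in $t$ with holomorphic coefficients in $(x,y)$. I expect the only genuinely delicate points to be the bookkeeping identifying the period with the diagonal obstruction and the mild verification that dividing by $n-m$ preserves convergence; the relative-cohomology viewpoint of \cite{cerveau-berthier} provides an alternative, more conceptual route to the same solvability statement.
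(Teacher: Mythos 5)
Your proof is correct, and it reaches the decomposition by a route that mirrors, rather than reproduces, the paper's. Both arguments rest on the same two pillars: the explicit generating cycle $\gamma_c(s)=(x_0e^{is},\,c\,x_0^{-1}e^{-is})$, whose period extracts exactly the diagonal Taylor coefficients, and the inversion of a diagonal Euler-type operator, harmless for convergence because its eigenvalues on off-diagonal monomials have modulus $|n-m|\geq 1$. The difference is the order in which $a$ and $h$ are produced and the lemma invoked to finish. The paper first solves $x h_x - y h_y = xP - yQ$ for $h$ (this is its equation $\eta\wedge d(xy)=dh\wedge d(xy)$), whose obstruction is the diagonal of $xP-yQ$ --- precisely what its period computation delivers, with no further bookkeeping --- and then obtains $a$ from the classical division lemma applied to $(\eta-dh)\wedge d(xy)=0$, legitimate because $\sing\, d(xy)=\{0\}$ has codimension two. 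You instead solve the compatibility equation $(y\partial_y-x\partial_x)a=P_y-Q_x$ for $a$ first and recover $h$ by integrating the now-closed holomorphic form via the Poincar\'e lemma on a polydisc. Your route buys a more elementary closing step (no de Rham--Saito division at all), at the price of the extra verification that the coefficient of $x^ky^k$ in $P_y-Q_x$ is $(k+1)$ times that of $x^{k+1}y^{k+1}$ in $xP-yQ$, which you state correctly, so the period obstruction and the cokernel of your operator do coincide. The parametric step is handled as in the paper, by expanding $\eta^t=\sum_j t^j\eta_j$ and assembling $a^t=\sum_j t^j a_j$, $h^t=\sum_j t^j h_j$; your normalization of each $a_j$ to be diagonal-free is a pleasant extra that makes the choice canonical, though it is not needed for the formal-family conclusion.
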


\begin{proof}
Part $(i) \implies (ii)$ is straightforward. Assume now that we have (ii).
We write $\eta(x,y)= \sum\limits_{k,l \geq 0} A_{k,l} x^k y ^l dx +
\sum\limits_{k,l \geq 0} B_{k,l} x^k y ^l dy$ in convergent power series.
Choose now for each $ c \neq 0$ the path $\gamma_c(t)=(x_o e ^{it}, c x_o^{-1} e^{-it}), \,
0 \leq t \leq 2 \pi$ where $x_o \neq 0$ is fixed and $i^2 =-1$.

Then $\gamma_c \subset (f_c)$ as it is easily verified. We have
\[
\int_{\gamma_c} \eta = \sum\limits_{k,l \geq 0} A_{k,l} x_o^{k+1 - l} c^l \int \limits_{0}^{2\pi} e^{it(k+1 - l)}dt
+ \sum\limits_{k,l\geq 0} c^{l+1}(-i) x_o^{k +1 - l}B_{k,l} \int\limits_{0}^{2\pi} e^{it(k - l - 1)} dt.
\]

Notice that
\[
k+ 1 - l \ne 0 \implies \int\limits_{0}^{2 \pi} e^{it( k + 1 - l)}dt=0
\]
and
\[
k - l - 1 \ne 0 \implies \int \limits_{0}^{2\pi} e^{it(k - l - 1)} dt=0.
\]
Thus

\[
\int_{\gamma_c} \eta = \sum\limits_{l = k+1 \geq 1} A_{k,k+1} c^{k+1} \int\limits_{0}^{2\pi} 1 dt +
\sum\limits_{k = l + 1 \geq 1} (-i) c ^{k+1} B_{k, k -1} \int\limits_{0}^{2\pi} 1 dt.
\]

By hypothesis $\int_{\gamma_c} \eta=0, \forall c \ne 0$ so that $A_{k, k+1} = B_{k, k-1}, \forall k \geq 1$.

This allows us to formally solve the equation
\[
\eta \wedge d(xy) = dh(x,y) \wedge d(xy)
\]
obtaining a solution $h=\sum\limits_{k, l \geq 0} h_{k,l} x ^k y ^l$
which is in fact convergent.

Since $(\eta - dh) \wedge d(xy)=0$ we have by the classical division lemma that
\[
\eta - dh = a d(xy)\]
for some holomorphic function $a(x,y)$. The last part of the statement goes as follows: given an analytic family $\eta^t$ such that
$\int_{\gamma_c} \eta^t=0, \forall \gamma_c, \subset (f_c), \, \forall c \approx 0, \, \forall t \in \mathbb C_t $, we can write $\eta^t = \sum\limits_{j=0}^\infty t^j \eta_j$. Then we have $\int_{\gamma_c} \eta_j=0, \forall \gamma_c, \subset (f_c), \, \forall c \approx 0, \, \forall j\geq 0$. Using the above part we have $\eta_j = a_j d(xy) + dh_j$ and therefore
$\eta^t = \sum\limits_{j=0}^\infty (t^j a_j)d(xy) + \sum\limits_{j=0}^\infty t^jdh_j= \hat {a}^t d(xy) + d\hat {h}^t$ for obvious choices of $\hat {a}^t, {\hat h}^t \in \mathcal O_n \otimes \hat{\mathcal O_1}$.

\end{proof}

\section{Deformations of saddles in dimension $2$}
In this section we shall prove the first part of  Theorem~\ref{Theorem:I} for  $n=2$. We consider a deformation  $\omega^{t}=d(xy) + \sum\limits_{j=1}^\infty t^j \omega_j$ as in Theorem~\ref{Theorem:I}.
Assume that  we have
$\int_{\gamma_c}{\omega}^t=0, \forall \gamma_c, \forall c \approx 0$ and therefore
$\int_{\gamma_c}\omega_j=0, \forall j \geq 0, \forall \gamma_c$.
By Lemma~\ref{Lemma:II} above this implies
\[
\omega_j = a_j(x,y) d(xy) + dh_j(x,y)
\]
for some holomorphic functions $a_j,\, h_j$.
This gives
\[
{\omega}^t = d(xy) + \sum\limits_{j=1}^\infty t^j (a_j d(xy) + dh_j)
= (1+ \sum\limits_{j=1}^\infty t^j a_j) d(xy) + \sum\limits_{j=1}^\infty t^j dh_j.
\]
We may then write ${\omega}^t = a^t d(xy) + dh^t$ for obvious choices of $a^t$ and $h^t$ which are elements of $\mathcal O_n \otimes \hat{\mathcal O_1}$. This first pair of choices gives functions $a^t, h^t$ that may not depend analytically on $t$ in a convergent way, they may be only formal with respect to this variable. Thus we have $(i)\implies (ii)$ in Theorem~\ref{Theorem:I} for $n=2$.
Let us now assume that
$\omega^t= a^t d(xy) + dh^t$ as stated. For each fixed value $t$ of the parameter we have
\[
\int_{\gamma_c} \omega^t = \int_{\gamma_c} (a^t d(xy) + dh^t)=
\int_{\gamma_c} a^t .0 + \int_{\gamma_c} dh^t = 0.
\]
This proves that $(ii)\implies (i)$ in Theorem~\ref{Theorem:I}.

\section{Extension theorems}
Before going into the proof of Theorem~\ref{Theorem:I} for the case $n\geq 3$, we shall gather a few elements, mostly from
\cite{mattei-moussu}.
Let $\Omega$ be a germ of an integrable 1-form at $0\in \mathbb C^n, n \geq 3$.
We shall assume that $\codim\sing(\Omega)\geq 2$.
We shall say that a holomorphic  embedding $\mathcal I \colon (\mathbb C^2,0) \to (\mathbb C^n,0)$ is
{\it in general position with respect to} $\omega$ if:
\begin{enumerate}[{\rm(1)}]
\item $\sing(\mathcal I ^*(\Omega) )=\mathcal I^{-1}(\sing(\Omega))$.

\item $\codim\sing(\mathcal I^*(\Omega))=\inf\{\codim \sing(\Omega), 2\}$.
\end{enumerate}

According to the transversality theorem in \cite{mattei-moussu} (Theorem 2 page 508) there is always
a holomorphic embedding in general position with respect to $\Omega$.

We shall need the following result from \cite{cerveau-berthier} (cf. Extension theorem page 405):

\begin{Lemma}
\label{Lemma:extensionlemma}
Let $\omega, \eta$ be germs of holomorphic one-forms at $0\in \mathbb C^n, \, n \geq 3$. Assume that $\omega$ is integrable, $\omega \wedge d \omega=0$ and $\eta$ is closed in the leaves of $\omega$, \, $d \eta \wedge \omega=0$.
Let $ i \colon (\mathbb C^p,0) \to (\mathbb C^n,0)$ be a holomorphic embedding in general position with respect to $\omega$ where $ 2 \leq p \leq n$.
If we have $i^* (\eta) = a_0 i^* (\omega) + d h_0$ for some germs $a_0, h_0 \in \mathcal O_p$ then there are germs $a, h \in \mathcal O_n$ such that
$a_0 = i^* a, \, h_0 = i^* h$ and $\eta = a \omega + dh$.
The germs $a, h$ above are unique with the properties listed. 

There is a parametric version: if $\eta^t$ is an analytic family with $t\in \mathbb C_t$, $d\eta^t \wedge \omega=0$,  and if we can write $i^*(\eta^t)=a_0 ^t i^*(\omega) + dh_0 ^t$ for some one-parameter functions $a_0 ^t, h_0^t \in \mathcal O_n \otimes \hat{\mathcal O}_1$, then we can write $\eta_ t = a^t \omega + d h^t$ for some one-parameter functions $a^t, h^t \in \mathcal O_n \otimes \hat{\mathcal O_1}$.

\end{Lemma}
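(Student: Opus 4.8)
The plan is to reduce both existence and uniqueness to the construction of a holomorphic \emph{leafwise primitive} of $\eta$, whose single-valuedness is then forced by the transversal $i$. First observe that producing $a,h$ with $\eta=a\omega+dh$ is equivalent to producing a holomorphic $h$ with $(\eta-dh)\wedge\omega=0$: once such an $h$ is found, the hypothesis $\codim\sing\omega\geq 2$ lets the de Rham--Saito division lemma furnish a holomorphic $a$ with $\eta-dh=a\omega$. Thus the entire problem collapses to solving the single equation $\eta\wedge\omega=dh\wedge\omega$, which says precisely that $h$ restricts on each leaf $L$ of the foliation $\mathcal F$ defined by $\omega$ to a primitive of $\eta|_L$. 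The integrability $\omega\wedge d\omega=0$ guarantees, by Frobenius on $U\setminus\sing\omega$, that $\mathcal F$ is a genuine holomorphic foliation there, and $d\eta\wedge\omega=0$ guarantees that $\eta|_L$ is a closed $1$-form on every leaf, so leafwise primitives exist locally.

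The decisive point is single-valuedness, i.e. the vanishing of all periods $\oint_\gamma\eta$ along cycles $\gamma$ contained in leaves of $\mathcal F$, and this is where the general position of $i$ is used. By the transversality theorem of \cite{mattei-moussu}, the transversal $i(\mathbb C^p,0)$ meets every leaf and, crucially, the cycles of $(L\cap i(\mathbb C^p))$ generate the local first homology of the ambient leaf $L$. For a cycle $\gamma'$ lying in a leaf of the induced foliation $i^*\mathcal F$ one has $i^*\omega$ vanishing on vectors tangent to $\gamma'$, whence from $i^*\eta=a_0\,i^*\omega+dh_0$ we get $\oint_{\gamma'}i^*\eta=\oint_{\gamma'}a_0\,i^*\omega+\oint_{\gamma'}dh_0=0$. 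Since $\eta|_L$ is closed, any cycle $\gamma\subset L$ is homologous in $L$ to such a $\gamma'$, so $\oint_\gamma\eta=0$ for all of them. Hence the leafwise primitive $h$ is well defined and holomorphic on $U\setminus\sing\omega$; normalizing the leafwise integration constants so that $h$ agrees with $h_0$ along $i$ (using connectedness and a common base point) gives $i^*h=h_0$.

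It remains to extend across the singular set and to identify the restrictions. Because $\codim\sing\omega\geq 2$, Hartogs' extension theorem promotes $h$ to a holomorphic germ on all of $(\mathbb C^n,0)$; the $a$ obtained by division is holomorphic off $\sing\omega$ and extends likewise. Uniqueness is then short: if $a\omega+dh=0$ with $i^*a=0$ and $i^*h=0$, then $dh\wedge\omega=0$ shows $h$ is a first integral of $\mathcal F$, and being constant on leaves while vanishing on a transversal that meets every leaf forces $h\equiv 0$, whence $a\omega=0$ gives $a\equiv 0$; feeding this back shows $i^*a=a_0$ automatically. For the parametric statement I would run the construction order by order in $t$: each coefficient $\eta_j$ of $\eta^t=\sum_j t^j\eta_j$ satisfies the same hypotheses and yields $a_j,h_j$, and assembling $a^t=\sum_j t^j a_j$, $h^t=\sum_j t^j h_j$ produces the desired families in $\mathcal O_n\otimes\hat{\mathcal O}_1$.

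I expect the genuinely substantial obstacle to be the period-vanishing step: one must make precise, through the transversality/general position theorem, that every cycle in a leaf of $\mathcal F$ is realized up to homology (and compatibly with monodromy) inside the transversal, so that the vanishing of $\oint_{\gamma'}i^*\eta$ truly controls all periods of $\eta$. The holomorphy of the leafwise primitive near but off $\sing\omega$ and the application of Hartogs are delicate but standard; the topological matching of cycles via general position is the real heart of the argument.
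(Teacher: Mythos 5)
First, a point of comparison: the paper does not prove this lemma at all. The non-parametric statement is quoted from \cite{cerveau-berthier} (Extension theorem, p.~405), and the parametric version is asserted to follow ``with the same proof'' --- which is exactly your order-by-order argument in $t$: expanding $\eta^t=\sum_j t^j\eta_j$, each coefficient inherits $d\eta_j\wedge\omega=0$ and $i^*\eta_j=a_{0,j}\,i^*\omega+dh_{0,j}$, and the term-by-term assembly into $\mathcal O_n\otimes\hat{\mathcal O}_1$ is the same manipulation the paper carries out in the proof of Lemma~\ref{Lemma:II}. That part of your proposal is sound. Your two reductions are also correct: granting $\codim\sing(\omega)\geq 2$ (the standing hypothesis of the section, which you rightly import even though the lemma's statement omits it), the division lemma together with the second Riemann extension theorem reduces everything to finding a holomorphic $h$ with $(\eta-dh)\wedge\omega=0$ and $i^*h=h_0$; and your uniqueness argument (the difference of two solutions gives a first integral vanishing on the saturation of the section, which contains an open set) works.

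The genuine gap is the step you yourself flag in your last paragraph, and it is not a technicality but the entire content of the cited theorem. You establish $\oint_{\gamma'}i^*\eta=0$ only for cycles $\gamma'$ lying in leaves of the induced foliation $i^*\mathcal F$, and you then need three unproven facts: (a) every cycle of an ambient leaf $L$ is homologous in $L$ to such a $\gamma'$; (b) every leaf of a suitable representative actually meets $i(\mathbb C^p)$; and (c) the \emph{relative} periods vanish --- if $L\cap i(\mathbb C^p)$ is disconnected, with $p_0, p_1$ in distinct components, the quantity $\int_\delta\eta-\bigl(h_0(p_1)-h_0(p_0)\bigr)$ for a path $\delta\subset L$ from $p_0$ to $p_1$ is a well-defined obstruction that nothing in your argument kills; your phrase ``using connectedness and a common base point'' tacitly assumes $L\cap i(\mathbb C^p)$ connected, which is not established. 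None of (a)--(c) follows from the definition of general position used in the paper, which constrains only singular sets, namely $\sing(i^*\Omega)=i^{-1}(\sing\Omega)$ and $\codim\sing(i^*\Omega)=\inf\{\codim\sing\Omega,2\}$; they are additional topological properties of generic sections, and securing them is precisely the substance of the transversality machinery of \cite{mattei-moussu} and of the extension theorem of \cite{cerveau-berthier} that the paper invokes. So your proposal correctly reconstructs the architecture (leafwise primitive, division, Hartogs, normalization on the section), but as written it reduces the lemma to its hard topological core rather than proving it.
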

Lemma~\ref{Lemma:extensionlemma} is found in \cite{cerveau-berthier} (cf. Extension theorem page 405) in the non-parametric form. Nevertheless, the parametric version can be obtained with the same proof.

From the above lemma we obtain:

\begin{Lemma}
\label{Lemma:corollary}
Let $\{\eta^t\}_{t \in \mathbb C_t}$ be an analytic family of  germs of holomorphic one-forms at $0\in \mathbb C^n, n \geq 2$ and assume that:
\begin{enumerate}
\item $d\eta^t \wedge d(xy)=0, \forall t \in \mathbb C_t$.
\item  $\oint_{\gamma_c} \eta^t =0, \forall \gamma_c \subset (f_c), \, \forall c \approx 0, \forall t \in \mathbb C_t.$
\end{enumerate}
Then we have $ \eta^t = a^t d(xy) + dh^t$ for some one-parameter formal family of germs $a^t, h^t \in \mathcal O_n\otimes \hat{\mathcal O_1}$.

\end{Lemma}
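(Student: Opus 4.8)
The plan is to reduce the higher-dimensional statement to the two-dimensional case already treated by Lemma~\ref{Lemma:II}, and then to propagate the resulting standard form back up to $\mathbb C^n$ using the parametric Extension Lemma~\ref{Lemma:extensionlemma}. For $n=2$ there is nothing to do: condition $(2)$ is exactly hypothesis $(ii)$ of Lemma~\ref{Lemma:II}, and its parametric clause delivers directly the desired writing $\eta^t = a^t\, d(xy) + dh^t$ with $a^t,h^t$ formal in $t$. So I would immediately restrict attention to $n\geq 3$.

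For $n\geq 3$, first I would set $\omega := d(xy)$ and observe that $\omega$ is integrable (it is closed, so $\omega\wedge d\omega = 0$), and that $\codim\sing(\omega)\geq 2$ since $d(xy) = x\,dy + y\,dx$ vanishes only on $\{x=y=0\}$. Next I would invoke the transversality theorem of \cite{mattei-moussu} (Theorem 2, page 508) to pick a holomorphic embedding $i\colon(\mathbb C^2,0)\to(\mathbb C^n,0)$ in general position with respect to $\omega$. The key technical point to verify here is that the pullback $i^*\omega$ is, after the induced change of coordinates on the $2$-dimensional source, again (equivalent to) a form of saddle type $d(\tilde x\tilde y)$; this is where I expect to spend some care, since Lemma~\ref{Lemma:II} is stated specifically for the model $d(xy)$ and its explicit cycles $\gamma_c$. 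One way around this is to choose the embedding so that $i$ maps a transverse $2$-plane onto a slice on which $f\circ i = xy$ in suitable coordinates — the fibration $d(xy)=0$ being a product, a generic linear slice $\{z_1=\cdots=z_{n-2}=0\}$ (or a small holomorphic perturbation thereof) already restricts $f$ to $xy$, so that the cycles $\gamma_c\subset(f_c)$ in the slice are exactly the model cycles.

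With such an embedding fixed, the condition $d\eta^t\wedge\omega = 0$ from hypothesis $(1)$ says precisely that $\eta^t$ is closed on the leaves of $\omega$, which is the hypothesis required to apply Lemma~\ref{Lemma:extensionlemma}. On the source side, the pullbacks $i^*\eta^t$ are holomorphic one-forms on $(\mathbb C^2,0)$ whose integrals over the cycles $\gamma_c$ vanish for all $c\approx 0$ and all $t$, because these cycles lie in the slice and $\oint_{\gamma_c}\eta^t = \oint_{\gamma_c} i^*\eta^t = 0$ by hypothesis $(2)$. Applying the parametric form of Lemma~\ref{Lemma:II} to the family $i^*\eta^t$ yields $i^*\eta^t = a_0^t\, i^*\omega + dh_0^t$ for formal families $a_0^t,h_0^t \in \mathcal O_2\otimes\hat{\mathcal O}_1$. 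Finally I would feed this into the parametric Extension Lemma~\ref{Lemma:extensionlemma}, which extends the writing from the slice to the full space and produces $a^t,h^t\in\mathcal O_n\otimes\hat{\mathcal O}_1$ with $\eta^t = a^t\,d(xy) + dh^t$, as required.

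The main obstacle I anticipate is the compatibility between the \emph{choice of embedding in general position} (dictated by $\sing(\omega)$ and the transversality theorem) and the \emph{requirement that $i^*\omega$ be the model saddle with its explicit cycles} (needed to apply Lemma~\ref{Lemma:II}). If a generic linear slice is automatically in general position with respect to $d(xy)$ — which is plausible since $\sing(\omega)=\{x=y=0\}$ is smooth and a generic $2$-plane meets it transversally in a point — then the two requirements are reconciled at once and the argument closes cleanly; otherwise one must check by hand that the particular slice one is forced to take still carries cycles homologous to the $\gamma_c$, so that the vanishing of the integrals transfers correctly.
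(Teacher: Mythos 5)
Your proposal is correct and follows essentially the same route as the paper's proof: restrict to a two-plane in general position (transversality theorem of \cite{mattei-moussu}), apply the parametric clause of Lemma~\ref{Lemma:II} to the pullback family, and then extend via the parametric Extension Lemma~\ref{Lemma:extensionlemma}. The compatibility worry you raise is exactly what the paper's choice of coordinates $(\tilde x,\tilde y)=(i^*x,i^*y)$ disposes of, since then $i^*d(xy)=d(\tilde x\tilde y)$ is literally the model saddle and the slice cycles are the model cycles.
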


\begin{proof}
We choose a holomorphic embedding $i \colon (\mathbb C^2, 0) \to (\mathbb C^n,0)$ in general position with respect to $d(xy)$.    Let us put $(\tilde x, \tilde y):=(i^* x, i^* y)$ local coordinates at  $0 \in \mathbb C^2$.
The restriction $\eta^t_0 = i^* \eta^t$ satisfies $\oint_{\gamma_c} \eta^t_0=0, \forall \gamma_c \subset (\tilde x \tilde y=c)$. Therefore, from Lemma~\ref{Lemma:II} there are germs $a^t_0, h^t_0 \in \mathcal O_2 $ such that $\eta^t_0 = a^t _0 d(\tilde x \tilde y) + dh^t_0$. By Lemma~\ref{Lemma:extensionlemma} we have $\eta^t = a^t d(xy) + dh^t$ for some formal family of holomorphic  germs $a^t, h^t \in \mathcal O_n\otimes \hat{\mathcal O_1}$.
\end{proof}

Another important tool is the following extension theorem:

\begin{Lemma}[\cite{mattei-moussu} Theorem~1 page 507]
\label{Lemma:extension}
Let $\Omega$ be a germ of an integrable 1-form at the origin $0 \in \mathbb C^n$ and
let $\mathcal I \colon (\mathbb C^2,0) \to (\mathbb C^n,0)$ be a holomorphic embedding
in general position with respect to $\Omega$. If $f_o\colon (\mathbb C^2,0) \to (\mathbb C,0)$ is
a germ of a holomorphic first integral for $\omega=0$ then there is a unique function
$f\colon  (\mathbb C^n,0)\to (\mathbb C,0)$ which is a first integral for $\Omega$ and such that
$\mathcal I^*(f)=f_o$, i.e., $f_o=f \circ  \mathcal I$.
\end{Lemma}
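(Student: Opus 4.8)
The plan is to construct $f$ by transporting the slice first integral $f_o$ along the leaves of the foliation $\mathcal F$ defined by $\Omega=0$, the heart of the matter being a holonomy argument that makes this transport well defined. Write $\Sigma:=\mathcal I(\mathbb C^2)$ and recall that the general position setup presupposes $\codim\sing(\Omega)\geq 2$, so $\mathcal F$ is a genuine codimension-one foliation off a set of codimension at least two; after a Stein factorization we may also assume $f_o$ primitive with connected fibres. The general position conditions $\sing(\mathcal I^*\Omega)=\mathcal I^{-1}(\sing\Omega)$ and $\codim\sing(\mathcal I^*\Omega)=\min\{\codim\sing\Omega,2\}$ say precisely that $\Sigma$ is transverse to $\mathcal F$ away from $\sing\Omega$ (in particular $\mathcal I^*\Omega$ has an isolated singularity at $0\in\mathbb C^2$) and meets the leaves in a way that detects the full transverse structure; in particular every leaf of $\mathcal F$ near $0$ meets $\Sigma$.

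First I would set up the transport. For $p$ close to $0$ and off $\sing\Omega$, let $L_p$ be the leaf of $\mathcal F$ through $p$; it intersects $\Sigma$ in a nonempty analytic set which is a union of leaves of $\mathcal I^*\mathcal F$. I then define $f(p):=f_o(q)$ for any $q\in L_p\cap\Sigma$. Since $f_o$ is constant on each leaf of $\mathcal I^*\mathcal F$, the value $f_o(q)$ is unchanged as $q$ moves within a single component of $L_p\cap\Sigma$; the genuine issue is independence of the choice of component.

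This independence is the main obstacle, and I would settle it via holonomy. Given $q_1,q_2\in L_p\cap\Sigma$ joined by a path $\delta$ inside $L_p$, the holonomy of $\mathcal F$ along $\delta$ between one-dimensional transversals $T_1,T_2\subset\Sigma$ is a germ of biholomorphism $h\colon T_1\to T_2$ with $h(q_1)=q_2$. The key point, which is exactly what general position buys, is that the holonomy pseudogroup of $\mathcal F$ on transversals contained in $\Sigma$ coincides with the holonomy pseudogroup of $\mathcal I^*\mathcal F$; equivalently, the inclusion induces a surjection of the relevant fundamental groups of leaves, so $\delta$ may be replaced by a path whose holonomy is computed entirely inside $\Sigma$. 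Any holonomy map of $\mathcal I^*\mathcal F$ carries a leaf-intersection point on $T_1$ to the one on $T_2$ lying on the same leaf, hence preserves the level sets of $f_o$, i.e. $f_o\circ h=f_o$. Therefore $f_o(q_2)=f_o(h(q_1))=f_o(q_1)$ and $f$ is well defined on a punctured neighborhood of $0$.

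Finally I would verify holomorphy, the first-integral property, and uniqueness. Off $\sing\Omega$ the map $f$ is holomorphic, being locally $f_o$ composed with the holomorphic holonomy projection onto $\Sigma$; since $\codim\sing\Omega\geq 2$, Riemann's extension theorem extends $f$ holomorphically across $\sing\Omega$, and continuity forces $\mathcal I^*f=f_o$. By construction $f$ is constant on leaves, so $df\wedge\Omega=0$, and $f$ is nonconstant because $f_o$ is. Uniqueness is immediate: two such extensions agree on $\Sigma$, are both constant on leaves, and since every leaf meets $\Sigma$ they agree on a dense open set, hence everywhere. The delicate step throughout, and the one I expect to require the full force of the transversality and general position theory of Mattei-Moussu, is the comparison of holonomy pseudogroups, that is, showing that no transverse monodromy of $\mathcal F$ is lost upon restricting to the two-dimensional slice $\Sigma$.
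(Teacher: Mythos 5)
Your overall strategy --- transport $f_o$ along the leaves and use holonomy to make the transport well defined --- has the right flavor, but note first that the paper contains no proof of this statement to compare against: the lemma is imported verbatim from Mattei--Moussu (Theorem~1, page 507 of \cite{mattei-moussu}), the paper adding only the remark that a parametric version holds by the same arguments. Measured against what such a proof must actually contain, your sketch has a genuine gap, and it sits exactly at the step you flag in your last sentence. The two general position conditions, as defined in the paper, constrain only singular sets: $\sing(\mathcal I^*\Omega)=\mathcal I^{-1}(\sing(\Omega))$ and $\codim\sing(\mathcal I^*\Omega)=\min\{\codim\sing(\Omega),2\}$. These do give transversality of $\Sigma=\mathcal I(\mathbb C^2)$ to the leaves off $\sing(\Omega)$, but they do \emph{not} give any of the three topological claims your argument rests on: (a) that every leaf of $\mathcal F$ near $0$ meets $\Sigma$ (you need this both to define $f$ on a full punctured neighborhood and in your uniqueness argument); (b) that $L_p\cap\Sigma$ is connected, so that $q_1$ and $q_2$ can be joined inside $\Sigma$ at all; and (c) that $\pi_1(L_p\cap\Sigma)\to\pi_1(L_p)$ is surjective, so that $\delta$ can be homotoped into $\Sigma$. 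Claims (b) and (c) together \emph{are} your asserted coincidence of holonomy pseudogroups; they are statements about the global topology of the leaves, about which the pointwise, infinitesimal conditions (1)--(2) say nothing, so asserting that "general position buys" them is essentially assuming the theorem.

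What closes the gap in Mattei--Moussu is the hypothesis your argument never uses in an essential way: the \emph{existence} of $f_o$ itself. In dimension two it forces the leaves of $\mathcal I^*\mathcal F$ to be closed, with finitely many separatrices whose holonomy maps are periodic, hence finite; a Reeb-type stability argument then controls the ambient leaves through points of $\Sigma$ --- they are closed with finite holonomy, their saturation fills a neighborhood of the origin, and the local leaf space is tame enough for a primitive $f_o$ (connected fibers, which you correctly arrange by Stein factorization) to descend. This is the reason their paper is titled "Holonomie et int\'egrales premi\`eres": the finiteness of holonomy, not a pseudogroup comparison derived from transversality, is what tames the paths in $L_p$ that are not homotopic into $\Sigma$. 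Alternatively one can bypass leaf topology: extend from $f_o$ a formal first integral of $\Omega$ (an extension computation in the style of Lemma~\ref{Lemma:extensionlemma}) and invoke Malgrange \cite{malgrangeII}, using $\codim\sing(\Omega)\geq 2$; this is in fact the mechanism this paper uses in Lemma~\ref{Lemma:III}. Your outer layer --- local holonomy projections give holomorphy, Riemann extension across the codimension-two singular set, uniqueness from agreement on a saturated set --- is fine once (a)--(c) are established; as written, though, the heart of the proof is cited rather than proved.
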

Similarly to Lemma~\ref{Lemma:extensionlemma}, Lemma~\ref{Lemma:extension} above admits a parametric version obtained with similar arguments to the ones used for the non-parametric version. 
Indeed, the extension theorem above holds with the same proof for one-parameter first integrals of
analytic deformations. It is only enough to observe that given such a deformation $\Omega^t$ in $(\mathbb C^n,0)$,
if a holomorphic embedding $\mathcal I \colon (\mathbb C^2,0) \to (\mathbb C^n,0)$ is in general position with
respect to $\Omega_o$ then the same holds for $\Omega^t$ for $t$ close enough to $0$.

\section{Deformations of saddles in dimension $n \geq 3$}

In this section we shall finish the proof of  Theorem~\ref{Theorem:I}.
We consider an analytic deformation $\{{\omega}^t\}_{t \in \mathbb C,0}$  of the complex saddle $d(xy)=0$ in
$(\mathbb C^2 \times \mathbb C^{n-2},0), \, n \geq 3$. We write $\omega^t = d(xy) + t\omega_1 + t^2 \omega_2 + \cdots$. We  assume that $\int_{\gamma_c}{\omega}^t=0, \forall \gamma_c, \forall c \approx 0$.
Our first  step is:

\begin{Lemma}
\label{Lemma:coefficientforms}
There are formal families of holomorphic function germs $a^t, h^t \in \mathcal O_n \otimes \hat{\mathcal O_1}$ such that for each $t$ we have $\omega^t = a^t d(xy) + dh^t$. 
\end{Lemma}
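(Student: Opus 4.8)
The plan is to reduce Lemma~\ref{Lemma:coefficientforms} to Lemma~\ref{Lemma:corollary}, since the latter delivers exactly the conclusion we want, namely $\omega^t = a^t d(xy) + dh^t$ with $a^t, h^t \in \mathcal O_n \otimes \hat{\mathcal O_1}$. Hypothesis (2) of Lemma~\ref{Lemma:corollary}, the vanishing of all cycle integrals $\oint_{\gamma_c}\omega^t = 0$, is already assumed in the present setting. So the only thing that genuinely needs to be established is hypothesis (1), the relative closedness condition $d\omega^t \wedge d(xy) = 0$ for every $t$.

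First I would record that $\omega_0 = d(xy)$ is closed, so $d\omega^t = \sum_{j\geq 1} t^j\, d\omega_j$ and it suffices to check $d\omega_j \wedge d(xy) = 0$ for each $j \geq 1$. The natural source for this is the integrability hypothesis $\omega^t \wedge d\omega^t = 0$ carried in the definition of the deformation, expanded in powers of $t$. Writing out $\omega^t \wedge d\omega^t = 0$ and collecting the coefficient of $t^m$ gives the cascade of relations $\sum_{i+j=m} \omega_i \wedge d\omega_j = 0$; with $\omega_0 = d(xy)$ this isolates, at each order, a term $d(xy) \wedge d\omega_m$ against lower-order contributions. The hoped-for step is that these relations, together with what we already know from the cycle-integral vanishing, force $d\omega_j \wedge d(xy) = 0$ directly, so that each $\omega^t$ (and indeed each $\omega_j$) is closed along the fibers $(f_c)$.

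The cleanest route, and the one I would try first, is to avoid the $t$-expansion of integrability altogether and instead argue on the fibers directly. On each regular fiber $xy = c$ the vanishing of every period $\oint_{\gamma_c}\omega^t = 0$, combined with the fact that $H^1$ of the fiber is generated by the single cycle $\tilde\gamma_c$ described in the introduction, says that the restriction of $\omega^t$ to the fiber is exact, hence in particular closed on the fiber. Relative closedness $d\omega^t \wedge d(xy) = 0$ is precisely the statement that $d\omega^t$ restricts to zero on each fiber, i.e.\ that the fiberwise restriction of $\omega^t$ is closed; so this should follow from the period vanishing once one is careful that ``closed on every fiber'' is equivalent to ``$d\omega^t$ is a multiple of $d(xy)$ in the appropriate module sense,'' which is the content of $d\omega^t \wedge d(xy)=0$. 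I expect the main obstacle to be making this fiberwise-to-ambient passage rigorous in the formal-in-$t$, holomorphic-in-space setting: one must justify that the relation holds as an honest identity of germs of $2$-forms (not merely on a dense set of regular fibers), which requires controlling behavior across the critical fiber $c=0$ and invoking that a holomorphic $2$-form vanishing on all nearby regular fibers vanishes identically against $d(xy)$.

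Once $d\omega^t \wedge d(xy) = 0$ is in hand, both hypotheses of Lemma~\ref{Lemma:corollary} are satisfied with $\eta^t := \omega^t$, and that lemma immediately yields formal families $a^t, h^t \in \mathcal O_n \otimes \hat{\mathcal O_1}$ with $\omega^t = a^t d(xy) + dh^t$, completing the proof. In summary, the work is entirely concentrated in verifying relative closedness; the representation statement is then a direct citation. I would therefore devote the writeup to the integrability-expansion argument for $d\omega_j \wedge d(xy)=0$, keeping the final invocation of Lemma~\ref{Lemma:corollary} to one line.
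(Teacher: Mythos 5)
Your proposal is correct, and it shares the paper's overall skeleton: both arguments reduce the lemma to verifying the relative closedness $d\omega^t \wedge d(xy)=0$ and then invoke the restriction--extension machinery (your citation of Lemma~\ref{Lemma:corollary} is exactly what the paper's appeal to Lemma~\ref{Lemma:extensionlemma} together with the period hypothesis amounts to). The genuine difference is how that hypothesis is verified. The paper writes $\omega^t=d(xy)+t\alpha$ and extracts $0=d(xy)\wedge d\alpha=\alpha\wedge d\alpha$ from the integrability identity $t\,d(xy)\wedge d\alpha+t^2\,\alpha\wedge d\alpha=0$; since $\alpha$ itself depends on $t$, this splitting does not follow by comparing powers of $t$ (the coefficient of $t^m$ yields the mixed relation $d(xy)\wedge d\omega_{m+1}+\sum_{i+j=m+1,\,i,j\geq 1}\omega_i\wedge d\omega_j=0$), which is precisely the decoupling difficulty you anticipated in your expansion route and then wisely abandoned. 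Your fiberwise route instead derives $d\omega^t\wedge d(xy)=0$ from the period hypothesis alone, and it is sound: since the vanishing $\oint_{\gamma_c}\omega^t=0$ is assumed for \emph{every} continuous closed path in the fiber (not merely a homology generator), the restriction of each coefficient $\omega_j$ to a regular fiber is exact --- one defines a primitive directly by path integrals; note that the fact you cite, that $H^1$ of the fiber is generated by $\tilde\gamma_c$, is neither needed nor by itself sufficient here, because integrals of a possibly non-closed form over homotopic loops differ --- hence closed on the fiber. Then the pointwise linear algebra is exactly as you indicate: a $2$-form $\beta$ satisfies $\beta|_{\ker v}=0$ if and only if $\beta\wedge v=0$, so $d\omega_j\wedge d(xy)=0$ on the open dense set $\{xy\neq 0\}$ and hence identically by the identity principle, coefficientwise in $t$. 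This closes the gap you flagged about passing across the critical fiber. A noteworthy by-product of your argument is that the integrability of $\omega^t$ is not used at all in this lemma; what the paper's route buys is brevity (two lines from integrability), while yours buys robustness, since it replaces the paper's delicate splitting step with a fully decoupled, order-by-order argument.
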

\begin{proof}
Let us write $\omega^t = d(xy) + t \alpha$. Then $d \omega^t =
t d\alpha$. From the integrability condition $\omega^t \wedge d\omega^t=0$ we have $0=td(xy)\wedge d \alpha + t^2 \alpha \wedge d \alpha$.
Therefore we must have $0 = d(xy) \wedge d \alpha = \alpha \wedge d \alpha$.  Hence $d\omega^t \wedge d(xy) = t d\alpha\wedge d(xy)=0$.
 We also have by hypothesis $\oint_{\gamma_c} \omega^t = 0, \forall \gamma_c \subset (xy=c)$.

  Using now Lemma~\ref{Lemma:extensionlemma} we conclude that $\omega^t = a^t d(xy) + dh^t$ for some formal families of holomorphic germs $a^t, h^t \in \mathcal O_n\otimes \hat{\mathcal O_1}$. 
\end{proof}

We shall need another lemma:

\begin{Lemma}
\label{Lemma:III} If $\codim \sing(\omega^t \wedge d(xy))\geq 2$ then there is a holomorphic function $F(x,y,z,t)$ such that for each $t$, the restriction
$F^t(x,y,z)=F(x,y,z,t)$ is a first integral for ${\omega}^t$.

\end{Lemma}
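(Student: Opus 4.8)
The plan is to manufacture the first integral out of the standard form $\omega^t=a^tdf+dh^t$ supplied by Lemma~\ref{Lemma:coefficientforms}, reducing the problem to a planar ordinary differential equation in the pair of functions $f=xy$ and $h^t$, and then to recover genuine analyticity in the parameter through a general position restriction to $\mathbb C^2$. First I would read off the functional relation hidden in the integrability condition. With $\omega^t=a^tdf+dh^t$ one has $d\omega^t=da^t\wedge df$, so $\omega^t\wedge d\omega^t=0$ collapses to $dh^t\wedge da^t\wedge df=0$. Since $\omega^t\wedge df=dh^t\wedge df$, the hypothesis $\codim\sing(\omega^t\wedge df)\geq 2$ says that $df\wedge dh^t$ does not vanish identically; hence $df$ and $dh^t$ are independent on a dense open set, and $dh^t\wedge da^t\wedge df=0$ forces $da^t$ into the span of $df,dh^t$ there. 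By the local Stein factorization argument already used in the paper (\cite{mattei-moussu} p.~472, \cite{Gr-Re}) this produces a holomorphic germ $\Phi^t$ of two variables, formal in $t$, with $a^t=\Phi^t(f,h^t)$, so that $\omega^t=\Phi^t(f,h^t)\,df+dh^t$.

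Next I would integrate the resulting planar equation. Writing $u=f$ and $v=h^t$, I would take a germ $\Psi^t(u,v)$ with $\Psi^t_u=\Phi^t\Psi^t_v$, i.e.\ a first integral of the vector field $\partial_u-\Phi^t\partial_v$; this field is nonsingular, its $\partial_u$-coefficient being $1$, so $\Psi^t$ exists as a holomorphic germ, normalized by $\Psi^0(u,v)=u+v$. A one-line computation gives $d(\Psi^t(f,h^t))\wedge\omega^t=(\Psi^t_u-\Phi^t\Psi^t_v)\,df\wedge dh^t=0$, so $\widehat F^t:=\Psi^t(f,h^t)$ is a first integral of $\omega^t$ with $\widehat F^0=f$. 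Because $\Phi^t$ and $h^t$ are so far only formal in $t$, this $\widehat F^t\in\mathcal O_n\otimes\hat{\mathcal O}_1$ is holomorphic in space but merely formal in the parameter.

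Finally I would upgrade the parameter dependence. Fix a holomorphic embedding $i\colon(\mathbb C^2,0)\to(\mathbb C^n,0)$ in general position with respect to $df$, hence also with respect to $\omega^t$ for small $t$. Since $\sing(\omega^t)\subseteq\sing(\omega^t\wedge df)$ has codimension $\geq 2$, the genuinely analytic-in-$t$ form $i^*\omega^t$ has an isolated singularity at $0\in\mathbb C^2$, for which $i^*\widehat F^t$ is a formal-in-$t$ first integral. The theorem of Malgrange and Mattei-Moussu (\cite{malgrangeI,mattei-moussu}), in its parametric form, then converts this into an honest first integral $F_0^t$, analytic in $(x,y)$ and in $t$; and the parametric extension theorem Lemma~\ref{Lemma:extension} pushes $F_0^t$ forward to a holomorphic $F(x,y,z,t)$ with $i^*F^t=F_0^t$, with $dF^t\wedge\omega^t=0$ for every $t$, and with $F^0=f$, as required.

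The step I expect to be the real obstacle is precisely this passage from a formal to a convergent dependence on $t$: the standard-form coefficients $a^t,h^t$ are only formal in the parameter, so the planar construction by itself yields only a formal-in-$t$ first integral, and honest analyticity must be imported from outside, through the isolated singularity on the general-position $\mathbb C^2$ and the rigidity of formal first integrals there. A secondary delicate point is the functional-dependence step $a^t=\Phi^t(f,h^t)$, where one must genuinely use the codimension hypothesis to exclude the locus on which $df$ and $dh^t$ degenerate, so that $\Phi^t$ is a bona fide holomorphic germ rather than a multivalued object.
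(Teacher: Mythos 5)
Your first two steps coincide, almost calculation for calculation, with the paper's own proof: the collapse of integrability to $dh^t\wedge da^t\wedge df=0$, the use of the codimension hypothesis (via $\omega^t\wedge df=dh^t\wedge df$) to run a parametric formal Stein factorization producing $a^t=\Phi^t(f,h^t)$, and the integration of the resulting nonsingular planar model are exactly the paper's steps; your $\Phi^t$ is the paper's $\hat\vr$, and your $\Psi^t$, the first integral of the nonsingular form $\Phi^t\,du+dv$, is the paper's $\hat F_o(w_1,w_2,t)$ obtained by formal Frobenius. The only cosmetic difference so far is that you argue slice-wise in $t$, while the paper phrases everything through the Pfaffian system $\mathcal S=\mathcal S\{\Omega,dt\}$ on $(\mathbb C^n\times\mathbb C_t,0)$, where $\Omega(x,y,z,t):=\omega^t(x,y,z)$; in particular your derivation of $dh^t\wedge da^t\wedge df=0$ is a legitimate shortcut past the paper's computation $\Omega\wedge d\Omega\wedge dt=0$.

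The genuine gap is your final step. There is no ``parametric form of Malgrange and Mattei--Moussu'' to cite: the rigidity theorems of \cite{malgrangeI} and \cite{mattei-moussu} convert a first integral that is \emph{formal in the space variables} of a fixed \emph{convergent} $1$-form into a convergent one, whereas here the divergence sits in the deformation parameter $t$ --- indeed even the data $a^t,h^t,\Phi^t$ are only in $\mathcal O_n\otimes\hat{\mathcal O}_1$, so for $t_0\neq 0$ you cannot even specialize and apply the classical theorem fiberwise. This is precisely where the paper deploys its key device, which is what your citation must be replaced by: the pair $(\Omega,dt)$ generates a \emph{holomorphic} integrable Pfaffian system whose singular set is contained in $\sing(\omega^t\wedge df)$, hence has codimension $\geq 2$ in $\mathbb C^n\times\mathbb C_t$; your $\widehat F^t=\Psi^t(f,h^t)$, formal in $t$ but a first integral on each formal slice, makes $(\widehat F,t)$ a \emph{formal} first integral of this system; and Malgrange's Frobenius with singularities for systems \cite{malgrangeII} then produces a holomorphic first integral that can be taken of the form $(F(x,y,z,t),t)$, which is the conclusion of the Lemma. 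Once this substitution is made your argument closes, and it does so directly in the ambient space: the detour through a general-position embedding $i\colon(\mathbb C^2,0)\to(\mathbb C^n,0)$ followed by the parametric extension Lemma~\ref{Lemma:extension} is then unnecessary (though harmless --- the same system-plus-Malgrange argument would also work on $\mathbb C^2\times\mathbb C_t$ and extend). As written, however, the step you yourself identify as ``the real obstacle'' is asserted rather than proved, and the sources you point to do not contain it.
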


\begin{proof}
Let us write $\hat A(x,y,z,t)=a^t(x,y,z), \,  \hat H(x,y,z,t)=h^t(x,y,z)$. 
We have ${\omega}^t(x,y,z) = \hat A(x,y,z,t) df + d_{x,y,z}\hat H(x,y,z,t)$ where
$\hat A(x,y,z,t) = 1 + \sum\limits_{j=1}^\infty a_j(x,y,z) t^j, \,
\hat H(x,y,z,t) = \sum\limits_{j=1}^\infty h_j(x,y,z) t^j$ and $f=xy$.
The functions $a_j(x,y,z),\,  h_j(x,y,z)$ are holomorphic.
The notation is clear: \[ d_{x,y,z} \hat H(x,y,z,t)=\sum\limits_{j=1}^\infty t^j dh_j(x,y,z)\] and so on. Let us now put
$\Omega(x,y,z,t):=\omega_t(x,y,z)$ seen as a holomorphic one-form.
This one-form is not necessarily integrable, indeed $d \Omega(x,y,t) =
d_{x,y,z} {\omega}^t (x,y,z) + \frac{\partial {\omega}^t}{\partial t}\wedge dt$ in the same natural notation
above.
Hence
$\Omega \wedge d \Omega = {\omega}^t \wedge \frac{\partial {\omega}^t}{\partial t} \wedge dt$.

Let $\mathcal S$ be the system of holomorphic one-form germs generated by $\Omega$ and $dt$, {\it i.e.},
$\mathcal S = \mathcal S(\{\Omega, dt\})$ regarded as a module over $\mathcal O_n\{\{t\}\}$.

\begin{Claim}
Regarding the above system $\mathcal S$ we have:
\begin{enumerate}[{\rm(i)}]
\item $\mathcal S$ is integrable.
\item $\mathcal S= \mathcal S\{\hat A df + d \hat H, dt\}$.
\end{enumerate}
\end{Claim}

\begin{proof}$\;$
	
(i) Indeed, $\Omega \wedge d \Omega \wedge dt  = {\omega}^t \wedge \frac{\partial {\omega}^t}{\partial t} \wedge dt \wedge
		dt =0$.
		
(ii) We have $d\hat H = d_{x,y,z} \hat H + \frac{\partial d \hat H}{\partial t} dt$ where
		$\frac{\partial \hat H}{\partial t} dt = \sum\limits_{j=1}^\infty j t^{j-1} h_j(x,y,z) dt$.
		Since $\sum\limits_{j=1}^\infty j t^{j-1} h_j(x,y,z) \in \mathcal O_n\{\{t\}\}$ we obtain the result stated. 
\end{proof}

Put now $\hat \alpha (x,y,z,t):= \hat A df + d \hat H$. We claim:
\begin{Claim}
We have $\hat \alpha \wedge d \hat \alpha \wedge dt =0$.

\end{Claim}
\begin{proof}
Indeed, since the system $\mathcal S$ is integrable this implies that $\hat \alpha \wedge d \hat \alpha \wedge dt=0$.
\end{proof}

Therefore we have $d \hat H \wedge d \hat A \wedge df \wedge dt =0$. From the expression for $d \hat H$ we then obtain
$d_{x,y,z} \hat H \wedge d_{x,y,z} \hat A \wedge df =0$ and therefore 
\[
d_{x,y,z} \hat A \wedge (d_{x,y,z} \hat H \wedge  df) =0.
\]

Now we observe that ${\omega}^t \wedge df = (\hat A  df + d_{x,y,z} \hat H ) \wedge df = d_{x,y,z} \hat H \wedge df$.
Therefore, if we assume that $\codim _{\mathbb C^3}\sing({\omega}^t \wedge df) \geq 2$ then we have 
\[
\codim _{\mathbb C^3}\sing(d_{x,y,z} \hat H \wedge df)\geq 2.
 \]
 
 By a parametric formal version of Stein factorization theorem (\cite{Hartshorne}, \cite{malgrangeII}) we conclude that there is a formal function
$\hat \vr (w_1, w_2, t)= \sum\limits_{j=0}^\infty t^j \vr_j(w_1,w_2)$ such that
$\hat A(x,y,z,t)=\hat \vr(f(x,y),\hat H(x,y,z,t),t)$ or equivalently 

\[
\hat A^t(x,y,z)= \hat\vr^t (f(x,y),\hat H^t(x,y,z))
 \]
in the obvious sense. Thus we can write
\[
\hat \alpha = \hat A df + d \hat H = \hat \vr (f, \hat H, t) df + d \hat H.
\]
Therefore
$\mathcal S = \mathcal S \{ \hat \vr (f, \hat H , t) df + d \hat H, dt\}$.
The system $\mathcal S$ is therefore the formal pull-back of the non-singular system
$\mathcal S_o = \mathcal S \{ \hat \vr (w_1, w_2, t) dw_1 + dw_2, dt\}$ by the formal map
$\hat \sigma = (f, \hat H, t)$. Since $\mathcal S_o$ is non-singular (though it is formal)
it admits a formal first integral (formal Frobenius theorem) given by a pair
$(\hat F_o (w_1, w_2, t), t)$. The same holds for $\mathcal S$, i.e., $\mathcal S$ admits a formal
first integral of the form $(\hat F_o(f, \hat H, t), dt)$. Nevertheless, the system
$\mathcal S$ is {\em holomorphic} and with singular set of codimension $\geq 2$. Thus,
by Malgrange \cite{malgrangeII} there is a holomorphic first integral for $\mathcal S$ which can be
chosen of the form $(F(x,y,z,t), t)$. The holomorphic function $F(x,y,z,t)$ is then such that
$F^t$ is a holomorphic first integral for  $\omega$ for each value of the parameter  $t$.

\end{proof}

\begin{proof}[End of the proof of Theorem~\ref{Theorem:I}]
There is very few remaining. By hypothesis $\codim \sing(\omega^t \wedge d(xy))\ne 1$. If $\codim \sing(\omega^t \wedge d(xy))=0$ then we $\omega^t \wedge d(xy))=0, \forall t$ and $f=xy$ is a first integral for $\omega^t, \forall t \in \mathbb C_t$, the deformation is trivial. 
Assume now that $\codim\sing (\omega^t \wedge d(fg))\geq 2$. Then we may apply above lemmas and conclude that $\omega^t$ admits a one-parameter first integral $F^t$. 
\end{proof}

\section{Examples}
This section contains examples proving that the result in Theorem~\ref{Theorem:I} is sharp. The first couple of
examples show that the $\sing({\omega}^t \wedge df)$ has codimension $\geq 2$ condition cannot be dropped.
\begin{Example}
\label{Example:0}
{\rm
We consider the family $\{{\omega}^t\}_{t\in \mathbb C}$ of germs of one-forms at $0\in \mathbb C^2 \times \mathbb C^{n-2}$, given by ${\omega}^t= d(xy) + t(xy)^2dx$ in coordinates
$(x,y,z_1,...,z_{n-2})\in \mathbb C^2\times \mathbb C^{n-2}$. Then $\omega_0=d(xy)$ and
we have

(i) $\dint_{\gamma_c}{\omega}^t = \int_{\gamma_c} (d(xy) + t(xy)^2dx) = t . c^2. \int_{\gamma_c} dx=0, \forall \gamma_c
$
where $\gamma_c$ runs all over the cycles in $(f_c): xy=c\ne 0$.

Nevertheless

(ii) ${\omega}^t \wedge d(xy) = t(xy)^2 dx \wedge d(xy) = t(xy)^2 x dx \wedge dy = tx^3 y^2 dx \wedge dy
$.

Thus $\sing({\omega}^t \wedge d(xy)) = (tx^3 y^2=0)$ and for $t\ne 0$ we have
$\sing({\omega}^t \wedge d(xy)) = (x=0) \cup (y=0)$ which violates condition
$\codim_{\mathbb C^3,0}({\omega}^t\wedge df) \geq 2$.

Finally, we observe that:

\begin{Claim}
The one-form ${\omega}^t$ does not admit a holomorphic first integral for $t \ne 0$.
\end{Claim}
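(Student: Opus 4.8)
The plan is to reduce first to the planar case and then to use the holonomy characterization of first integrals. Since $\omega^t=d(xy)+t(xy)^2\,dx$ involves only the variables $x,y$, the foliation it defines on $(\mathbb{C}^2\times\mathbb{C}^{n-2},0)$ is the product of a planar foliation $\mathcal{F}^t$ on $(\mathbb{C}^2,0)$ with the trivial fibration $(x,y,z)\mapsto(x,y)$. A holomorphic first integral $F(x,y,z)$ would be constant on the leaves $L\times\mathbb{C}^{n-2}$, hence independent of $z$ and equal to a planar first integral $F(x,y)$ of $\mathcal{F}^t$. Thus it is enough to show that $\mathcal{F}^t$ admits no holomorphic first integral at $0\in\mathbb{C}^2$ when $t\neq0$.

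In dimension two the linear part of $\omega^t$ is the nondegenerate saddle $d(xy)=y\,dx+x\,dy$, so the origin is an isolated singularity and the two coordinate axes are invariant separatrices. I would then appeal to the Mattei--Moussu theory: a germ of holomorphic foliation with an isolated singularity at $0\in\mathbb{C}^2$ admits a holomorphic first integral if and only if its leaves are closed near the origin, which for such a saddle is equivalent to the holonomy group of each separatrix being finite (periodic). The problem therefore becomes the computation of the holonomy of, say, the separatrix $\{y=0\}$ on a transversal $\{x=x_0\}$, and the verification that the resulting germ $h\colon(\mathbb{C},0)\to(\mathbb{C},0)$ is \emph{not} of finite order.

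To make $h$ explicit I would integrate the equation $\omega^t=0$ directly. Written as $dy/dx=-y/x-txy^2$ it is a Bernoulli equation, linearized by the substitution $v=1/y$, and one obtains the closed first integral $H=\tfrac{1}{xy}-tx$. The leaves are then explicit graphs whose analytic continuation along the loop $x=x_0e^{i\theta}$, $0\le\theta\le2\pi$, produces the holonomy map. The main obstacle is precisely this last step: the first integral $H$ produced by the Bernoulli reduction is only \emph{meromorphic} (it is polar along $\{xy=0\}$), so the real content is to decide whether its level structure descends to a genuine single-valued holomorphic invariant near $0$ or whether the monodromy gathered around the separatrix has infinite order. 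Establishing the latter is the crux; once the holonomy is shown to be non-periodic, the Mattei--Moussu criterion forbids a holomorphic first integral for $\mathcal{F}^t$, and the product reduction of the first paragraph transports this conclusion to $\mathbb{C}^2\times\mathbb{C}^{n-2}$.
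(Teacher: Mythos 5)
Your reduction to the planar case is fine, and your Bernoulli computation of the meromorphic first integral $H=\frac{1}{xy}-tx$ is correct (the paper's own proof produces $-H$ the same way, by dividing $\omega^t$ by $(xy)^2$). But your argument has a genuine gap exactly where you flag it: you never show that the holonomy of $\{y=0\}$ has infinite order --- and this step cannot be completed, because it is false. On the leaf through $(x_0,y_0)$ one has $\frac{1}{xy}-tx=C$ with $C=\frac{1}{x_0y_0}-tx_0$ constant along the leaf, so the leaf is the graph $y(x)=\frac{1}{x(C+tx)}$, a \emph{single-valued} rational function of $x$; analytic continuation around the loop $x=x_0e^{i\theta}$, $0\le\theta\le2\pi$, therefore returns to $y(x_0)=y_0$, and the holonomy of the separatrix $\{y=0\}$ is the identity. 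By the very Mattei--Moussu criterion you invoke, this is evidence \emph{for}, not against, the existence of a holomorphic first integral.

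In fact the Claim itself is false, and your approach, pushed honestly to completion, disproves it. Since the numerator of $H=\frac{tx^2y-1}{xy}$ is a unit of $\mathcal{O}_2$, the M\"obius transform $G:=-1/H=\frac{xy}{1-tx^2y}$ is holomorphic near the origin, non-constant, with $G(0)=0$, and a direct computation gives $dG=\omega^t/(1-tx^2y)^2$, i.e. $\omega^t=(1-tx^2y)^2\,dG$; so $\omega^t$ admits a holomorphic first integral for \emph{every} $t$ (indeed the one-parameter integral $F(x,y,z,t)=\frac{xy}{1-tx^2y}$, with $F^0=xy$). The paper's one-line proof --- that the ``pure meromorphic'' first integral $-\frac{1}{xy}+tx$ excludes a holomorphic one by Cerveau--Mattei --- misapplies that theorem, which concerns first integrals $f/g$ with $f(0)=g(0)=0$ and $f,g$ coprime, i.e. a genuine indeterminacy point forcing infinitely many separatrices through $0$; here there is no indeterminacy, and the meromorphic integral is just the reciprocal of a holomorphic one. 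So your route (holonomy) genuinely differs from the paper's (the Cerveau--Mattei classification), both are incomplete as written, and completing yours shows the example does not in fact witness the sharpness of the hypothesis $\codim\sing(\omega^t\wedge d(xy))\geq 2$.
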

Indeed, for each value $t\in \mathbb C$ fixed we have ${\omega}^t / (xy)^2  = d(xy)/(xy)^2 + tdx =
d( - (xy)^{-1}) + t dx = d \big( - (xy) ^{-1}  + tx\big)$. The existence of a pure meromorphic first integral, for $t\ne 0$,  shows that this foliation admits no holomorphic first integral (see \cite{Cerveau-Mattei} Chapter~II, Theorem 1.1,
page 106, for a study of
foliations with pure meromorphic first integrals).

}
\end{Example}

A more elaborate version of the above example is given below:

\begin{Example}
\label{Example:1}
{\rm
We consider the family $\{{\omega}^t\}$ defined by ${\omega}^t=e^{ty} d(xy) + txydx$ in coordinates
$(x,y,z_1,...,z_{n-2})\in \mathbb C^2\times \mathbb C^{n-2}$. Then $\omega_0=d(xy)$ and
\[
{\omega}^t= d(xy) + t(y^2 d(xy) + xydx) + t^2 y^4/2 d(xy) + \ldots
\]

We have

(i) $\dint_{\gamma_c}{\omega}^t = \int_{\gamma_c} (e^{ty^2} d(xy) + txy)dx = t . c. \int_{\gamma_c} dx=0, \forall \gamma_c
$
where $\gamma_c$ runs all over the cycles in $(f_c): xy=c\ne 0$.

Nevertheless

(ii) ${\omega}^t \wedge d(xy) = txy dx \wedge d(xy) = txy x dx \wedge dy = tx^2 y dx \wedge dy.
$

Thus $\sing({\omega}^t \wedge d(xy)) = (tx^2 y=0)$ and for $t\ne 0$ we have
$\sing({\omega}^t \wedge d(xy)) = (x=0) \cup (y=0)$ which violates condition
$\codim_{\mathbb C^3,0}({\omega}^t\wedge df) \geq 2$.

Finally, we observe that:

\begin{Claim}
The one-form ${\omega}^t$ does not admit a holomorphic first integral for $t \ne 0$.

\end{Claim}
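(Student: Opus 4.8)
The plan is to follow the scheme of Example~\ref{Example:0}: exhibit the obstruction to holomorphic integrability and invoke a standard criterion. The difference is that here a first integral turns out to be transcendental rather than pure meromorphic, so instead of citing \cite{Cerveau-Mattei} I would read off the obstruction from the holonomy of a separatrix. Since $\omega^t$ does not involve the variables $z_1,\dots,z_{n-2}$, it defines a germ of a singular holomorphic foliation $\mathcal F^t$ on $(\mathbb C^2,0)$, and it is enough to prove that this planar germ has no holomorphic first integral (a holomorphic first integral for $\omega^t$ induces, by restriction to a generic plane $\{z=c\}$, a holomorphic first integral for $\mathcal F^t$). Writing
\[
\omega^t=(ye^{ty^2}+txy)\,dx+xe^{ty^2}\,dy,
\]
I would first record that $\sing(\omega^t)=\{0\}$, that the $1$-jet of $\omega^t$ is $d(xy)=y\,dx+x\,dy$, and that the two coordinate axes $\{x=0\}$ and $\{y=0\}$ are invariant. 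Thus $0$ is a reduced saddle of $\mathcal F^t$ with eigenvalue ratio $-1$ and with the axes as its two separatrices.

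First I would recall the Mattei--Moussu criterion \cite{mattei-moussu,malgrangeI}: at such a reduced singularity $\mathcal F^t$ admits a holomorphic (equivalently formal) first integral if and only if the holonomy of one separatrix is periodic, that is, of finite order. Because the eigenvalue ratio equals $-1$, the linear part of the holonomy $h$ of $\{y=0\}$ is $e^{-2\pi i}=1$, so $h(y)=y+\sum_{j\ge 2}a_j y^j$ is tangent to the identity; and a tangent-to-the-identity germ has finite order precisely when it is the identity. Hence the claim reduces to showing that for $t\ne 0$ the holonomy $h$ of $\{y=0\}$ is \emph{not} the identity.

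The core step is the computation of $h$. Putting $u=xy$, along a leaf the equation $\omega^t=0$ becomes
\[
\frac{du}{u}=-t\,e^{-ty^2}\,dx=(-t+t^2y^2-\cdots)\,dx.
\]
Fixing a small loop $|x|=x_0$ inside the separatrix $\{y=0\}$ and using $y$ as transversal coordinate, integrating the left-hand side along the lift of the loop to the leaf through $(x_0,y)$ yields $\log\big(h(y)/y\big)$. The term $-t\oint dx$ vanishes, so the first possibly nonzero contribution is $t^2\oint y^2\,dx$. To leading order in $y$ the leaf satisfies $xy\,e^{tx}\equiv K$ with $K=x_0y\,e^{tx_0}$, whence $y^2\approx K^2x^{-2}e^{-2tx}$ and, by the residue theorem,
\[
\oint_{|x|=x_0}x^{-2}e^{-2tx}\,dx=2\pi i\,\operatorname{Res}_{x=0}\big(x^{-2}e^{-2tx}\big)=-4\pi i\,t.
\]
Substituting back I obtain $h(y)=y-4\pi i\,t^3x_0^2e^{2tx_0}\,y^3+\cdots$, whose cubic coefficient is nonzero for every $t\ne 0$. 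Therefore $h$ is a nontrivial parabolic germ, of infinite order, and by the criterion above $\mathcal F^t$ has no holomorphic first integral.

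The hard part will be the rigor of this holonomy expansion: one must justify that replacing the true leaf by its first-order model $xy\,e^{tx}=\mathrm{const}$ computes the first nonvanishing Taylor coefficient of $h$ correctly. I would handle this in the usual way, expressing the coefficients of $h$ as iterated variational integrals along the loop, noting that $e^{-ty^2}$ is even in $y$ so that no quadratic term appears and the higher corrections to the leaf only affect orders $\ge 5$, and then isolating the cubic term by the residue computation above. Everything is analytic in $t$ and the cubic coefficient degenerates exactly at $t=0$, in agreement with $\omega^0=d(xy)$ having the holomorphic first integral $xy$. An equivalent way to package the same computation, which I would mention, is that it exhibits a nonzero resonant term in the formal normal form of the saddle, so that $\mathcal F^t$ is not (formally, hence not analytically) linearizable, which for a reduced resonant saddle is again equivalent to the absence of a holomorphic first integral.
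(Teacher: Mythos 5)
Your proposal is correct and follows essentially the same route as the paper, which only sketches this argument: compute the holonomy of the separatrix $\{y=0\}$, observe it is tangent to the identity (eigenvalue ratio $-1$), show it is nontrivial hence of infinite order, and conclude via the Mattei--Moussu criterion that no holomorphic first integral exists. Your write-up in fact supplies the details the paper omits --- the reduction to the planar germ, the leaf model $xye^{tx}=K$, and the residue computation giving the nonzero cubic coefficient $-4\pi i\,t^3x_0^2e^{2tx_0}$ --- correctly resolving the paper's typographical inconsistency in favor of the factor $e^{ty^2}$ used in its series expansion.
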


The proof is based on the computation of the first terms of the holonomy map
of the separatrix $(y=0)$ and in showing that this map is not finite (indeed, if such a map is finite then since it
is tangent to the identity, it must be trivial, which is not the case). According then to \cite{mattei-moussu}
this implies that there is not holomorphic first integral for the foliation.

}
\end{Example}

\begin{Example}
\label{Example:2}
{\rm
This next example shows that the result is not valid for saddles of the form $f=x^n y^m=const$ with $(n,m)\ne (1,1)$ at $0\in \mathbb C^2 \times \mathbb C^{n-2}$. Indeed,
consider the one-form $\omega_0=xy d(x^2 y^3)/(x^2 y^3)= 2ydx + 3 xdy$. This one-form corresponds to the saddle $x^2 y^3 = c\in \mathbb C$. We can consider the deformation ${\omega}^t : = \omega_0 + t \omega_1$ given by
$\omega_1 =  a_1 \omega_0 + dh_1$ where $a_1$ is a constant and $h_1=xy$.
Then we have ${\omega}^t = \omega_0 + ta_1 \omega_0 + t dh_1$ and therefore
\[
{\omega}^t/(xy)=(1+ t a_1) d(x^2 y^3)/(x^2 y^3) + t d(xy)/(xy).
\]

This shows that, for $a_1\in\mathbb C$ the one-form ${\omega}^t$ admits a  first integral of liouvillian type
given by $f=(x^2 y^3) ^{1 + a_1 t} (xy)^t$ which is not holomorphic in general. This one-form shall not
admit a holomorphic first integral.

}
\end{Example}

\begin{Example}
\label{Example:3}
{\rm We shall now give an example of a deformation of the saddle $d(xy)=0$ for which there is
not holomorphic first integral. The reason will be that the deformation fails to meet the integral condition
$\oint _{\gamma_c} \omega^t =0$ along the cycles $\gamma_c \subset (f_c)$.
For this we consider a one-form
\[
\omega_{k,\lambda}=(1+ (\lambda -1)xy)ydx + (1+ \lambda xy) xdy + g(z)dz
\]
where $g(z)$ is a holomorphic germ at the origin with $g(0)=0$.
This describes a perturbation of the  classical formal models due to Martinet-Ramins (\cite{martinet-ramisresonant}) (obtained when $g=0$) for the resonant singularities $d(xy) + \Omega_2(x,y)=0$ where $\Omega_2$ has order
$\geq 2$ at the origin.  We can rewrite
$\omega_{k,\lambda}=(1+ \lambda xy)d(xy) - xy^2 dx$.

If we put
\[
\omega^t=(1+ t \lambda xy)d(xy) - t xy^2 dx + tg(z)dz
\]
then we obtain an analytic deformation  of $\omega^0=\omega_0:=d(xy)$.
For a  cycle $\gamma_c\subset (xy=c), \, c \ne 0$ we have
\[
\int_{\gamma_c} \omega^t = (1+ t \lambda c) \int_{\gamma_c} d(xy)  -
t c \int _{\gamma_c} ydx = -  t c \int _{\gamma_c} ydx \ne 0, \forall t \ne 0.
\]

The fact that the last integral is not zero is an easy consequence of Green-Stokes theorem, or
via direct computation.
Finally, the formal model and therefore $\omega^t$ for $t\ne 0$,  does not admit
a holomorphic first integral as it is well-known since the work of Martinet-Ramis mentioned above.

}
\end{Example}

\section{Real analytic centers}

In this section we shall prove Corollary~\ref{Corollary:center}.
We shall need the following version of Lemma~\ref{Lemma:III}:

\begin{Lemma}
\label{Lemma:center}
Let  $\eta(x,y)= A(x,y)dx + B(x,y)dy$ be a germ of analytic 1-form at $0\in \mathbb R^2$ such that $\oint_{\delta_r} \eta =0, \forall r> 0$ small enough. Then  $\eta(x,y)$ writes as $\eta(x,y)= a(x,y) d(x^2 + y^2) + dh(x,y)$ for some germs of analytic
functions $a(x,y), \, h(x,y)$. Moreover, there is a parametric version:
if $\eta^t$ depends analytically on the parameter $t\in \mathbb R_t$ and 
 $\oint_{\delta_r} \eta^t =0, \forall r> 0$ small enough, $\forall t \in \mathbb R_t$,  then we may choose $a^t$ and $h^t$ such that $\eta^t = a^t d(x^2 + y^2) + dh^t$ and $a^t, h^t$ being formal families of real analytic function germs.

\end{Lemma}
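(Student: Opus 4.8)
The plan is to reduce this real center problem to the complex saddle problem already settled in Lemma~\ref{Lemma:II}. First I would complexify $\eta$ to a germ of holomorphic one-form $\eta^{\mathbb{C}} = A(x,y)\,dx + B(x,y)\,dy$ at $0\in\mathbb{C}^2$, where $A,B$ are now regarded as holomorphic functions whose Taylor coefficients are real. Then I apply the $\mathbb{C}$-linear change of coordinates $u = x+iy$, $v = x-iy$, under which $x^2+y^2 = uv$; thus the center $d(x^2+y^2)=0$ becomes the saddle $d(uv)=0$, and $\eta^{\mathbb{C}}$ becomes a holomorphic one-form $\tilde A(u,v)\,du + \tilde B(u,v)\,dv$ in the new coordinates. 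Writing $\delta_r$ as $x = r\cos\theta,\ y = r\sin\theta$ gives $u = re^{i\theta},\ v = re^{-i\theta}$, so the complexification of $\delta_r$ is exactly a cycle of the type $\tilde\gamma_c$ with $c = r^2$, i.e. a generator of the first homology of the fibre $\{uv = r^2\}$.

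Next I would observe that the integral of $\eta^{\mathbb{C}}$ along a cycle contained in a fibre $\{uv=c\}$, $c\ne 0$, depends only on the homology class of that cycle: the restriction of the holomorphic one-form $\eta^{\mathbb{C}}$ to the Riemann surface $\{uv=c\}\cong\mathbb{C}^*$ is a holomorphic, hence closed, one-form. Consequently $c\mapsto\oint_{\tilde\gamma_c}\eta^{\mathbb{C}}$ is a well-defined holomorphic function of $c$ near the origin, independent of the chosen representative. By hypothesis it vanishes for every $c = r^2$ with $r>0$ small; since the positive reals accumulate at $0$, it vanishes identically for all small $c$. Hence $\oint_{\gamma_c}\eta^{\mathbb{C}} = 0$ for every cycle $\gamma_c\subset\{uv=c\}$ and every $c\approx 0$, which is precisely condition (ii) of Lemma~\ref{Lemma:II} in the coordinates $(u,v)$. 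That lemma yields holomorphic germs $\tilde a,\tilde h$ with $\eta^{\mathbb{C}} = \tilde a\,d(uv) + d\tilde h$; returning to $(x,y)$ we obtain $\eta^{\mathbb{C}} = a\,d(x^2+y^2) + dh$ for holomorphic germs $a,h$.

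The main obstacle is that the germs $a,h$ produced this way are a priori complex-valued, whereas the statement demands real analytic ones. To fix this I would use the coefficient-conjugation operator $\Phi$ defined by $\Phi(g)(x,y) := \overline{g(\bar x,\bar y)}$, which conjugates Taylor coefficients, is a ring endomorphism commuting with $d$, satisfies $\Phi^2=\operatorname{Id}$, and fixes exactly the real analytic germs. Since $A,B$ and $x^2+y^2$ have real Taylor coefficients, $\Phi(\eta^{\mathbb{C}}) = \eta^{\mathbb{C}}$ and $\Phi(d(x^2+y^2)) = d(x^2+y^2)$; applying $\Phi$ to the decomposition produces a second one, $\eta^{\mathbb{C}} = \Phi(a)\,d(x^2+y^2) + d\Phi(h)$. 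Averaging the two and setting $a_0 := \frac12(a+\Phi(a))$ and $h_0 := \frac12(h+\Phi(h))$ gives $\eta^{\mathbb{C}} = a_0\,d(x^2+y^2) + dh_0$ with $a_0,h_0$ fixed by $\Phi$, hence real analytic; restricting to the real locus proves $\eta = a_0\,d(x^2+y^2) + dh_0$, as required.

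Finally, for the parametric version I would expand $\eta^t = \sum_{j\ge 0} t^j\eta_j$ with real analytic one-form coefficients $\eta_j$. Since $\oint_{\delta_r}\eta^t = \sum_j t^j\oint_{\delta_r}\eta_j$ vanishes for all $t$, each $\oint_{\delta_r}\eta_j$ vanishes for all small $r>0$; applying the non-parametric statement to each $\eta_j$ gives real analytic $a_j,h_j$ with $\eta_j = a_j\,d(x^2+y^2) + dh_j$, whence $a^t := \sum_j t^j a_j$ and $h^t := \sum_j t^j h_j$ furnish the required formal families, exactly as in the last paragraph of the proof of Lemma~\ref{Lemma:II}.
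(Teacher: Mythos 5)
Your proof follows essentially the same route as the paper's: complexify $\eta$, pass to the coordinates $u=x+iy$, $v=x-iy$ that turn the center $d(x^2+y^2)=0$ into the saddle $d(uv)=0$, observe that $\delta_r$ becomes a generating cycle of the fibre $\{uv=r^2\}$, extend the vanishing of the periods from $c=r^2$ to all small $c\ne 0$ via holomorphy of $c\mapsto\oint_{\gamma_c}\eta^{\mathbb C}$ and the identity principle (the paper's first alternative), invoke Lemma~\ref{Lemma:II}, and handle the parametric case termwise in $t$ exactly as the paper does. The one place you go beyond the paper is a genuine improvement rather than a deviation: the paper simply asserts that decomplexifying yields \emph{real} analytic germs $a,h$, whereas your averaging with the coefficient-conjugation involution $\Phi(g)(x,y)=\overline{g(\bar x,\bar y)}$ actually justifies this, which matters because the decomposition produced by Lemma~\ref{Lemma:II} is not unique and realness of $a,h$ is therefore not automatic.
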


\begin{proof}
We start with the non-parametric version. 
We introduce the complex variables $z=x + iy$ and $w=x- iy$ where $i^2 =-1$.
Then we have $x^2 + y^2 = zw$. The complexification of $\eta$ is a holomorphic
1-form germ $\eta_\mathbb C$ at the origin $0\in \mathbb C^2$. The curve $\delta_r$ induces a cycle
$\gamma_{r^2}$ in the leaf $(f_c): zw =c$ of the saddle $df=0$ where $f=zw$. This cycle is nontrivial and therefore
it is a generator of the 1-homology of $(f_c)$. Therefore $\oint_{\delta_r} \eta =0 \implies \int _{\gamma_{r^2}}\eta=0$
for all closed curve in $(f_c)$ for the values $c=r^2$. Nevertheless, it is not clear, a priori, that the integral above vanishes for all the values $0 \ne c\in \mathbb C$. The alternative is then to reproduce the proof of Lemma~\ref{Lemma:II}. In order to see this, we observe that by a suitable choice of the generator $\gamma_c\subset (f_c)$ of the 1-homology of the leaf $(f_c): zw=c$ we may consider the map $I(\eta)\colon c \mapsto \int_{\gamma_c} \eta$ as a holomorphic map on the parameter $c\in D\setminus \{0\}$ where $D\subset\mathbb C$ is a small disc centered at the origin $0\in \mathbb C$. Indeed, this is easy to see from the fact that we may choose paths $\gamma_c$ based on the
points $(z_0,w_0)=(z_o,c/z_o)$ for a choice of $z_o\ne 0$ close enough to $0$. This gives then a holomorphic function
$I(\eta)\colon D\setminus \{0\} \to \mathbb C$ which satisfies $I(\eta)(r^2)=0, \forall r>0$ small enough. The identity principle then shows that $I(\eta)$ is identically zero.
Another possibility is to go back in the proof of Lemma~\ref{Lemma:II}  and observe that
the hypothesis on the real curves $\delta_r$ implies that, using the same notation from the proof of
Lemma~\ref{Lemma:II}, we must have
\[
\sum\limits_{l = k+1 \geq 1} A_{k,k+1} (r^2)^{k+1} \int\limits_{0}^{2\pi} 1 dt +
\sum\limits_{k = l + 1 \geq 1} (-i) (r^2) ^{k+1} B_{k, k -1} \int\limits_{0}^{2\pi} 1 dt = 0
\]
for all $r>0$ small enough. This already enough to assure that
$A_{k, k+1} = B_{k, k-1}, \forall k \geq 1$. Then we can proceed as in the proof of Lemma~\ref{Lemma:II} to conclude
that we have $\eta_\mathbb C (z,w) = A d(zw)+ dH(z,w)$ for some holomorphic functions $a,h$.
Since $\eta_\mathbb C$ is the complexification of $\eta$ and $z=x+ iy, w = x - iy$, we obtain real analytic functions $a(x,y), h(x,y)$ such that $\eta = a d(x^2 + y^2) + dh$. As for the parametric version we write 
$\eta^t = \sum\limits_{j=0} ^\infty t^j\eta_j$. Then $\oint_{\delta_r} \eta =0, \forall t \implies \oint_{\delta_r} \eta_j =0, \forall j\geq 0$. Using the above we may write $\eta_j = a_j d(x^2 + y^2) + dh_j$ for some analytic function germs $a_j, h_j$. Then we obtain the writing $\eta^t = \sum\limits_{j=0}^\infty t^j(a_j d(x^2 + y^2) + dh_j) = 
(\sum\limits_{j=0}^\infty t^j a_j) d(x^2 + y^2) + \sum\limits_{j=0}^\infty 
(t^jdh_j)= a^t d(x^2 + y^2) + dh^t$ for obvious choices of $a^t, h^t$.

\end{proof}

Using the above result as well as the extension techniques used in the proof of Lemma~\ref{Lemma:corollary} we obtain:

\begin{Lemma}
\label{Lemma:corollarycenter}
Let $\{\eta^t\}_{t \in \mathbb C_t}$ be an analytic family of  germs of real analytic one-forms at $0\in \mathbb R^n, n \geq 2$ and assume that:
\begin{enumerate}
\item $d\eta^t \wedge d(x^2 + y^2)=0, \forall t \in \mathbb R_t$
\item  $\oint_{\delta_r} \eta^t =0, \forall t \in \mathbb R_t$ and $\forall r>0$ small enough.
\end{enumerate}
Then we have $ \eta^t = a^t d(x^2 + y^2) + dh^t$ for some one-parameter formal family of real analytic function germs $a^t, h^t$ at $0\in \mathbb R^n$.

\end{Lemma}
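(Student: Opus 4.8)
The plan is to follow the proof of Lemma~\ref{Lemma:corollary} almost verbatim, replacing the appeal to Lemma~\ref{Lemma:II} by an appeal to the real version Lemma~\ref{Lemma:center}. The only genuinely new bookkeeping is that the integral hypothesis is supported on the real circles $\delta_r\subset\mathbb R^2$, which pins down the slice we are allowed to use and forces a final descent from the complexification back to $\mathbb R^n$.

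For $n=2$ the assertion is exactly Lemma~\ref{Lemma:center}, so I assume $n\geq 3$. First I would restrict $\eta^t$ to the coordinate plane: let $j\colon(\mathbb R^2,0)\hookrightarrow(\mathbb R^n,0)$, $(x,y)\mapsto(x,y,0,\dots,0)$, and set $\eta_0^t:=j^*\eta^t$. Because $\delta_r\subset\mathbb R^2$, integration over $\delta_r$ sees only this restriction, so hypothesis (2) gives $\oint_{\delta_r}\eta_0^t=0$ for all small $r>0$ and all $t$. The parametric form of Lemma~\ref{Lemma:center} then produces a formal-in-$t$ family $\eta_0^t=a_0^t\,d(x^2+y^2)+dh_0^t$ with $a_0^t,h_0^t$ real-analytic germs at $0\in\mathbb R^2$. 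Note that hypothesis (1) plays no role at this stage: in dimension two integrability is vacuous, and it is Lemma~\ref{Lemma:center} alone---through its identity-principle argument upgrading vanishing at the real values $c=r^2$ to vanishing for every complex $c$---that supplies the analytic content.

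Next I would complexify and extend. Writing $z=x+iy$, $w=x-iy$ we have $x^2+y^2=zw$, and in coordinates $(z,w,z_1,\dots,z_{n-2})$ on $(\mathbb C^n,0)$ the complexification $\eta_\mathbb C^t$ of $\eta^t$ is an analytic family of holomorphic one-forms satisfying $d\eta_\mathbb C^t\wedge d(zw)=0$ by (1). The complexification of $j$ is the coordinate embedding $i\colon(\mathbb C^2,0)\to(\mathbb C^n,0)$, $(z,w)\mapsto(z,w,0,\dots,0)$; since $\sing(d(zw))=\{z=w=0\}$ has codimension $2$, it is immediate that $i$ is in general position with respect to $\Omega:=d(zw)$ (both defining conditions reduce to $i^{-1}(\{z=w=0\})=\{0\}$). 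Complexifying the slice decomposition gives $i^*(\eta_\mathbb C^t)=a_{0,\mathbb C}^t\,i^*(\Omega)+dh_{0,\mathbb C}^t$. As $\Omega=d(zw)$ is (trivially) integrable and $\eta_\mathbb C^t$ is closed along its leaves, the parametric version of Lemma~\ref{Lemma:extensionlemma} applies and yields formal families $A^t,H^t\in\mathcal O_n\otimes\hat{\mathcal O_1}$ with $\eta_\mathbb C^t=A^t\,d(zw)+dH^t$.

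Finally I would descend to the reals. All of the data fed into Lemma~\ref{Lemma:extensionlemma}---namely $\Omega$, the one-form $\eta_\mathbb C^t$, and the slice germs $a_{0,\mathbb C}^t,h_{0,\mathbb C}^t$---are complexifications of real-analytic objects, so they are invariant under the complex-conjugation involution of $\mathbb C^n$ fixing $\mathbb R^n$. By the uniqueness clause of that lemma the extension $A^t,H^t$ is therefore forced to be real as well, since applying this involution to $(A^t,H^t)$ produces another extension of the same data and hence the same pair. Thus $A^t,H^t$ are complexifications of real-analytic families $a^t,h^t$, and restricting to $\mathbb R^n$ together with $zw=x^2+y^2$ gives $\eta^t=a^t\,d(x^2+y^2)+dh^t$, as required. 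I expect the only delicate points to be the admissibility of this particular coordinate slice---which cannot be replaced by a generic one, since the hypothesis lives on the real plane---and the reality descent at the end; the substantive analysis is already contained in Lemmas~\ref{Lemma:center} and \ref{Lemma:extensionlemma}.
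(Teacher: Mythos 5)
Your proposal is correct and follows exactly the route the paper intends: it proves this lemma only by the remark ``using Lemma~\ref{Lemma:center} together with the extension techniques of Lemma~\ref{Lemma:corollary}'', i.e.\ restrict to the distinguished real plane, apply the parametric Lemma~\ref{Lemma:center}, complexify in the coordinates $z=x+iy$, $w=x-iy$, and extend via the parametric Lemma~\ref{Lemma:extensionlemma} applied to $\Omega=d(zw)$. Your two added points of care --- checking general position of the specific (non-generic) coordinate slice directly, and descending to real coefficients via the uniqueness clause and the anti-holomorphic involution $(z,w)\mapsto(\bar w,\bar z)$ --- are details the paper leaves implicit, and both are handled correctly.
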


We are now in condition to prove our second main result:

\begin{proof}[Proof of Corollary~\ref{Corollary:center}]
The first part is a consequence of Lemma~\ref{Lemma:center}.
By hypothesis we have $\oint_{\delta_r}\omega^t=0$ for all $r>0$ small enough, where
$\delta_r$ denotes the circle $x^2 + y^2 = r^2$ in $\mathbb R^2$. From the hypothesis
$\oint_{\delta_r}\omega^t=0, \forall t, \forall r>0$ small enough, we conclude via Lemma~\ref{Lemma:center} that
we may write each $\omega_j= a_j d(x^2+ y^2) + dh_j$ for some real analytic functions
$a_j, h_j$.
We consider the complexification $\omega^t _\mathbb C$  of the
deformation $\omega^t = d(x^2 + y^2) + \sum\limits_{j=1}^\infty t^j \omega_j(x,y)$.

Then, in coordinates $z=x+ iy, w = x -iy$ we have the complexification given by
$\omega_\mathbb C ^t = d(zw) + \sum\limits_{j=1}^\infty t^j A_j d(zw) + dH_j$ where $A_j$ and
$H_j$ are the holomorphic functions obtained by the complexification of $a_j$ and $h_j$ respectively.
Now we observe that so far $\omega_\mathbb C^t$ is a one real parameter analytic deformation, because
$t$ is a real parameter. Now we introduce the complex parameter $T\in \mathbb C$ having $\Re T =t$, say $T =
t + is$. Then we can define the one complex parameter analytic deformation $\Omega^ T:=
d(zw) + \sum\limits_{j=1} ^\infty T^j (A_jd(zw) + dH_j)$. Then clearly from this expression we have
$\int_{\gamma_c} \Omega^T=0$ for all cycle $\gamma_c\subset (zw=c)$ for all $c\in \mathbb C\setminus \{0\}$ close enough to zero.
Finally, we observe that by the hypothesis the singular set  $\sing(\Omega^T\wedge d(zw))$ has codimension $\geq 2$ at the origin of
  $\mathbb C^n \times \mathbb C_T$. This implies by our Theorem~\ref{Theorem:I} that $\Omega^T$ admits a one-parameter
  holomorphic first integral say $F=F^T\colon (\mathbb C^n \times \mathbb C_T, 0)\to (\mathbb C,0).$ In particular
  the decomplexification $\omega^t$ admits a real analytic one-parameter first integral $f=f^t\colon (\mathbb R^n \times \mathbb R_t,0) \to (\mathbb R,0)$. This ends the proof of Corollary~\ref{Corollary:center}.

\end{proof}

\end{document}